\newtheorem*{rep@theorem}{\rep@title}
\newcommand{\newreptheorem}[2]{%
\newenvironment{rep#1}[1]{%
 \def\rep@title{#2 \ref{##1}}%
 \begin{rep@theorem}}%
 {\end{rep@theorem}}}
\newtheorem{theorem}{Theorem}[section]
\newtheorem*{theorem*}{Theorem}
\newtheorem*{lemma*}{Lemma}
\newtheorem{lemma}[theorem]{Lemma}
\newtheorem{corollary}[theorem]{Corollary}
\newtheorem{proposition}[theorem]{Proposition}
\newtheorem{conjecture}[theorem]{Conjecture}
\theoremstyle{definition}
\newtheorem{remark}[theorem]{Remark}
\newtheorem{exampleth}[theorem]{Example}
\newenvironment{example}{\begin{exampleth}}{\hfill $\diamond$  \end{exampleth}}
\DeclareMathOperator{\Gr}{Gr}
\DeclareMathOperator{\Dr}{Dr}
\DeclareMathOperator{\FlDr}{FlDr}
\DeclareMathOperator{\conv}{conv}
\DeclareMathOperator{\val}{val}
\DeclareMathOperator{\PM}{PM}
\DeclareMathOperator{\cone}{cone}
\DeclareMathOperator{\supp}{supp}
\DeclareMathOperator{\trop}{trop}
\DeclareMathOperator{\tropplus}{trop^+}
\DeclareMathOperator{\tropstar}{trop^*}
\DeclareMathOperator{\nustar}{\nu^*}
\newcommand{\R}{\mathbb{R}}
\newcommand{\N}{\mathbb{N}}
\newcommand{\Z}{\mathbb{Z}}
\newcommand{\C}{\mathbb{C}}
\newcommand{\cR}{\mathcal{R}}
\newcommand{\cC}{\mathcal{C}}
\newcommand{\cK}{\mathcal{K}}
\newcommand\ii{\mathbbm{i}}
\newcommand\kk{\mathbbm{k}}
\newcommand\ps{\{\!\{t\}\!\}}
\newcommand{\Ln}{\mathcal{L}_n}
\begin{document}

\title{Tropicalizing Principal Minors of Positive Definite Matrices}

\author{Abeer Al Ahmadieh}
\address{School of Mathematics, American University of Sharjah, Sharjah, Sharjah, UAE.}
\email{aalahmadieh@aus.edu}

\author{Felipe Rinc\'on}
\address{School of Mathematical Sciences, Queen Mary University of London, London, UK.}
\email{f.rincon@qmul.ac.uk}

\author{Cynthia Vinzant}
\address{School of Mathematics, University of Washington, Seattle, WA, USA.}
\email{vinzant@uw.edu}

\author{Josephine Yu}
\address{School of Mathematics, Georgia Institute of Technology, Atlanta, GA, USA.}
\email{jyu@math.gatech.edu}

\begin{abstract}
We study the tropicalization of the image of the cone of positive definite matrices under the principal minors map. 
It is a polyhedral subset of the set of $M$-concave functions on the discrete $n$-dimensional cube. 
We show it coincides with the intersection of the affine tropical flag variety with the submodular cone. 
In particular, any cell in the regular subdivision of the cube induced by a point in this tropicalization can be subdivided into base polytopes of realizable matroids.  We use this tropicalization as a guide to discover new algebraic inequalities among the principal minors of positive semidefinite matrices of a fixed size. We also extend our results to positive semidefinite matrices via taking closures in the tropical semifield $\mathbb{R}\cup\{-\infty\}$.
\end{abstract}

\maketitle

\section{Introduction}

For an $n\times n$ matrix $A$ and subsets $S, T \subset [n] = \{1,2,\dots,n\}$ of the same size, we will denote by $A(S,T)$ the determinant of the submatrix of $A$ with rows indexed by $S$ and columns indexed by $T$.  
For a square matrix $A$, we use $A_S$ to denote the {\em principal minor} $A(S,S)$.  A real symmetric or Hermitian matrix is called {\em positive definite} if all of its principal minors are positive. 

The {\em tropicalization} $\tropplus(S)$ of a semialgebraic subset $\mathcal{S}\subset \R_+^N$ is the closure of the image of $\mathcal{S}_{\R\ps}$ under coordinate-wise valuation, where $\mathcal{S}_{\R\ps}$ denotes the extension of $\mathcal S$ to the field of real Puiseux series.  It is a rational polyhedral fan and coincides with
the \emph{logarithmic limit set} as in \cite{Ale}:
\[
\tropplus(\mathcal{S}) = \lim_{t \rightarrow \infty} \{(\log_t(x_1), \dots, \log_t(x_N)) : x \in \mathcal{S}\}.
\]  
Tropicalization is a way to track the exponential behavior of $S$ as coordinates go to $0$ or $\infty$.

The principal minors of an $n \times n$ positive definite matrix $A$ can be encoded by the degree-$n$ homogeneous polynomial in $n+1$ variables
\[
f_A = \det({\rm diag}(x_1, \hdots, x_n) + y A)
= \sum_{S \subseteq [n]} A_{S} \cdot {\bf x}^{[n]\backslash S}y^{|S|}  \in \R[x_1,\dots,x_n,y],
\]
where ${\bf x}^T := \prod_{i \in T}x_i$.  This polynomial is {\em stable} \cite{BB06}, which implies that the tropicalization of the set of principal minors of positive definite matrices is a subset of the set of $M^\natural$-concave functions on $\{0,1\}^n$~\cite{Branden2}.
We see in Corollary \ref{cor:equalitydependingonn} that for $n\geq 6$ not every $M^\natural$-concave function arises this way from positive definite matrices.
Our main result is a description of this tropicalization in terms of the affine tropical flag variety. 

\begin{reptheorem}{thm:TropGrassFlagGrassPD}
The tropicalization of the set of principal minors of $n\times n$ positive definite real symmetric (resp.~Hermitian) matrices equals the intersection of the affine tropical flag variety over $\R$ (resp.~$\C$) with the 
cone of submodular functions on the hypercube $\{0,1\}^n$. 
\end{reptheorem}

The study of the tropical flag variety was initiated by Haque in $2012$~\cite{Haque}. Recently, Brandt, Eur, and Zhang  described it in terms of flag matroid polytope subdivisions \cite{BEZ}. 
The totally non-negative part of the tropical flag variety was studied in \cite{Boretsky}.

An $M^\natural$-concave function induces a regular subdivision 
of the unit cube $\{0,1\}^n$ that coarsens a subdivision in which every cell is a {\em dehomogenized matroid polytope}, i.e.\ a 0/1 polytope with edges of the form $e_i-e_j$ or $e_i$ contained in a layer $\{x\in [0,1]^n : k\leq \sum_i x_i \leq k+1\}$ for some $k$. When the $M^\natural$-concave function arises as the tropicalization of the principal minors of a positive definite matrix, we show that these matroid polytopes correspond to realizable matroids.

\begin{reptheorem}{thm:repMatroid}
 The regular subdivision of $[0,1]^n$ induced by the tropicalization of the principal minors of a real symmetric (or Hermitian) positive definite matrix is a coarsening of a subdivision of $[0,1]^n$ into dehomogenized matroid polytopes of matroids realizable over $\R$ (or $\C$, respectively).
\end{reptheorem}

Understanding determinantal inequalities for positive definite matrices is a classical topic of study, see for example \cite{CW,CFB}, and it continues to be a topic of interest \cite{TC,C,JZC,LS20,DWH,BS}. In \cite{TC}, for instance, Hall and Johnson use cone theoretic techniques to characterize the semigroup of ratios of products of principal minors over all positive definite matrices.

Any tropical polynomial inequality valid on the tropicalization of a semialgebraic set $\mathcal S$ can be lifted to a polynomial inequality valid on the set $\mathcal S$ itself \cite{JSY}. We use our description of the tropicalization of the set $\mathcal S$ of principal minors of positive definite matrices as a guide to understanding the possible polynomial inequalities valid on $\mathcal S$; in particular, we study lifts of the $M^\natural$-concavity constraints to polynomial inequalities on $\mathcal S$. 

\begin{example}
Let $\mathrm{PD}_3$ be the set of $3\times 3$ real symmetric positive definite matrices, and let $\mathcal S_3 \subset \R_+^{2^{[3]}}$ be the image of $\mathrm{PD}_3$ under the map sending a matrix $A$ to its vector of principal minors $(A_S)_{S \subset [3]}$. 
Consider the set $\cone(\mathcal S_3) = \{ \lambda\,x \mid \lambda \in \R_+  \text{ and } x \in \mathcal S_3 \}$,
where the coordinate indexed by the subset $\emptyset$ is not required to be equal to $1$. 
Theorem \ref{thm:TropGrassFlagGrassPD} says that $\trop(\cone(\mathcal S_3))$ consists of all vectors $w \in \R^{2^{[3]}}$ that satisfy the submodular inequalities and the tropical incidence relation
\begin{equation}\label{eq:3by3}
    \max(w_{1}+w_{23},w_{2}+w_{13}, w_{3}+w_{12}) \text{ is attained at least twice}.
\end{equation}
Consider now the projection $\mathcal P$ of the semialgebraic set $\cone(\mathcal S_3)$ onto the six coordinates indexed by the subsets $1,2,3,12,13,23$. 
The set $\mathcal P$ is a full-dimensional semialgebraic set of $\R_+^6$; indeed, the six $1 \times 1$ and $2\times 2$ principal minors of a positive definite matrix are algebraically independent.
However, its tropicalization $\trop(\mathcal P)$ satisfies the tropical equation~\eqref{eq:3by3}, and thus it is only a $5$-dimensional polyhedral subset of $\R^6$.
This tropical equation cannot be lifted to an algebraic equation satisfied by $\mathcal P$, but, as we explore in Section \ref{sec:inequalities}, it is instead a consequence of $\mathcal P$ satisfying the following three algebraic inequalities:
\begin{equation*}
    A_1 A_{23} + A_2 A_{13} \geq \frac{1}{2} A_3 A_{12}, \quad A_1 A_{23} + A_3 A_{12} \geq \frac{1}{2} A_2 A_{13}, \quad A_2 A_{13} + A_3 A_{12} \geq \frac{1}{2} A_1 A_{23}.
\end{equation*}
Other inequalities of this form with different coefficients, which also hold for all Lorentzian polynomials, are presented in Theorem \ref{thm:inequ1}.  
\end{example}

\subsection*{Organization} The paper is organized as follows. We introduce relevant background and definitions in Section~\ref{sec:background}, including various characterizations of $M^\natural$-concave functions and their relation to the affine flag Dressian. In Section~\ref{sec:FlagVariety}, we discuss a connection between the tropicalization of the flag variety and a slice of the $(n,2n)$ Grassmannian over arbitrary valuated fields. In Section~\ref{sec:pm}, we specialize these results to real closed and algebraically closed fields of characteristic zero and show that the resulting sets coincide with the tropicalization of the positive definite cone under the principal minor map.
In Section~\ref{sec:PSD} we show that tropicalization of semialgebraic sets is compatible with taking closures and use this to describe the tropicalization of principal minors of positive semidefinite matrices.
In Section~\ref{sec:inequalities}, we explore consequences for polynomial inequalities on the principal minors of positive definite matrices. Finally, in Section~\ref{sec:proof}, we prove a technical lemma used in Section~\ref{sec:FlagVariety}.

\subsection*{Acknowledgements}
We are grateful to Jonathan Boretsky, H.\ Tracy Hall,  Yassine El Maazouz, and Bernd Sturmfels for helpful discussions and insights.  CV is partially supported by the NSF DMS grant \#2153746. JY is partially supported by the NSF DMS grants \#1855726 and \#2348701.

\section{Definitions and Background}\label{sec:background}

\subsection{Tropicalization}\label{subsec:prelim}

A (nonarchimedean) {\em valuation} on a field $\cK$ is a map $\val : \cK^* \rightarrow \R \cup \{\infty\}$ satisfying
\begin{align*}\val(ab) &= \val(a)+\val(b),\\ \val(a+b)  &\geq \min(\val(a),\val(b)), \\
 \val(a) = \infty &\iff a = 0.
\end{align*}
The image $\Gamma = \val(\cK^*)$ of $\val$ is an additive subgroup of $\R$ called the \emph{value group}, where we denote $\cK^* = \cK \setminus \{0\}$. The valuation is \emph{non-trivial} if $\Gamma \neq \{0\}$.  
In order to use the {\bf max convention}, which is more compatible with inequalities over real fields, we will take $\nu:\cK \to \R\cup\{-\infty\}$ to be $\nu(a) = -\val(a)$ so that 
\begin{align*}\nu(ab) &= \nu(a)+\nu(b)\\
	\nu(a+b) & \leq \max(\nu(a),\nu(b))\\
     \nu(a) = -\infty &\iff a = 0.
\end{align*}
We use $\kk$ to denote the \emph{residue field} of $\cK$, 
which is the quotient of the valuation ring $\mathcal{O} = \{a\in \cK : \val(a)\geq 0\}$ by its unique maximal ideal $\mathfrak{m} = \{a\in \cK : \val(a)> 0\}$. 

Throughout the paper, we use  $\cK$ to denote a field with a nontrivial valuation with an infinite residue field $\kk$. The valuation has a {\em splitting} or a {\em cross-section} if there is a group homomorphism $\phi$ from the value group $\Gamma$ to the multiplicative group $\cK^*$ such that $\val \circ\,\phi$ is the identity on $\Gamma$.
If $\cK$ is real closed or algebraically closed, then there is a splitting by \cite[Lemma 2.4]{AGS} and \cite[Lemma 2.1.15]{MaclaganSturmfels}.
The splitting $\phi$ allows us to talk about the \emph{leading coefficient} of an element $a\in \cK^*$ as the element $\overline{t^{-\val(a)}a}$ in the residue field, where $ t = \phi(1)$.  Typical examples include the fields of rational functions, Laurent series, or Puiseux series over a field $\kk$.  The Puiseux series field $\kk\ps$ is real closed if $\kk$ is real closed and is algebraically closed if $\kk$ is algebraically closed of characteristic $0$.

Given a subset $\mathcal{S}\subset \cK^n$, we define 
\begin{align*}
\nu(\mathcal{S}) &= \{(\nu(x_1), \hdots, \nu(x_n)) : x\in \mathcal{S}\} \subset (\Gamma \cup \{-\infty\})^n \\
\nustar(\mathcal{S}) &= \{(\nu(x_1), \hdots, \nu(x_n)) : x\in \mathcal{S} \cap (\cK^*)^n\}
\subset \Gamma^n.
\end{align*}
For $S\subset (\cK^*)^n$, we have $\nustar(S) = \nu(S)$, so we may use these two notations interchangeably in this case.

For an algebraic subset $\mathcal{S} \subset \C^n$, defined by some polynomial equations over $\C$, and for an algebraically closed field extension $\cK \supset \C$ with nontrivial valuation, let $S_\cK$ be the algebraic subset of $\cK^n$ defined by the same polynomial equations.
Then the {\em tropicalization} of $\mathcal{S}$ is defined as
\[
\tropstar(S) = \overline{\nustar(S_\cK)}.
\]
It follows from the Fundamental Theorem of Tropical Geometry~\cite{MaclaganSturmfels} that this does not depend on the choice of the extension $\cK$.

Let $\cR$ be a real closed field with a nontrivial valuation.  Any real closed field has a unique total ordering $\geq$ compatible with the field operations, where $a \geq b$ if $a-b$ is a complete square.  We will always assume that the valuation is compatible with the order, namely,
 if $a,b\in \cR$ with $0 < a < b$, then $\nu(a) \leq \nu(b)$.
An example is the field of real Puiseux series $\R\ps$, where a series is positive if its leading coefficient is positive.

For a semialgebraic subset $\mathcal{S} \subset \R^n$ and a real closed field extension $\cR \supset \R$, let $\mathcal{S}_\cR$ be the semialgebraic subset of $\cR^n$ defined by the same semialgebraic expression (a first-order formula in the language of ordered rings) defining $\mathcal{S}$.  By the Tarski--Seidenberg transfer principle, the set $S_\cR$ does not depend on the choice of semialgebraic description, as the set of points satisfying one semialgebraic description but not the other is empty over $\R$ so it is empty over $\cR$ as well.

The {\em tropicalization} of $\mathcal{S} \subset \R^n$ is defined to be closure of the image of $S_\cR$ under coordinate-wise valuation:
\[\tropstar(\mathcal{S}) = \overline{\nustar(\mathcal{S}_\cR)}\]
where the closure is taken in the Euclidean topology on $\R^n$.

This set does not depend on the choice of the extension $\cR$ or the choice of semialgebraic expression defining $\mathcal{S}$~\cite{JSY}, and it coincides with the logarithmic set if $\mathcal{S}$ is defined over $\R$~\cite{Ale}. 
Moreover we have $\tropstar(\mathcal{S})\cap \Gamma^n = \nustar(\mathcal{S}_\cR)$ \cite[Theorem\  4.2]{Ale}, and $\tropstar(\mathcal{S})$ is a union of polyhedra~\cite[Theorem\ 3.1]{AGS}.

Finally, we use $\cC$ to denote the degree-two field extension $\cC=\cR(\ii)$ where $\ii^2=-1$. Since $\cR$ is real-closed, $\cC$ is algebraically closed. This comes with complex conjugation  $\overline{a+\ii b} = a-\ii b$ where $a,b\in \cR$. The valuation on $\cR$ naturally extends to $\cC$ by $\nu(a+\ii b) = \max\{\nu(a), \nu(b)\}$.

If $X\subset (\C^*)^n$ is an algebraic variety, then its image under coordinate-wise absolute-value is a semialgebraic subset $|X|$ of $\R_+^n$.
The tropicalization of $X$ can be defined as the tropicalization of $|X|$. 
We can similarly define $X_\cC$ to be the algebraic variety of $\cC^n$ defined by the vanishing of all polynomials that vanish on $X$. 
A part of the fundamental theorem of tropical geometry states that the logarithmic limit set of $|X|$ coincides with the closure of the image of $X_\cC$ under coordinate-wise valuation. That is, $\trop(X) = \overline{\nu(X)}$. A proof can be found, for example, using \cite[Theorem~2]{Bergman} and \cite[Proposition~3.8]{Gubler}.

The principal minors of a matrix give a map from the set of $n\times n$ matrices with entries in $\cR$ to $\cR^{2^{[n]}}$, sending $A \mapsto (A_S)_{S \subset [n]}$. The image of the sets of symmetric and Hermitian matrices have been studied in \cite{HS,oeding,LS09,AV1,AV2} and it has many applications in probability and statistics, see for instance \cite{Yassine}. A symmetric matrix  over a real field $\cR$, or a Hermitian matrix over $\cC$, is called {\bf positive definite} or PD  if all of its principal minors are  positive.  Let $\PM^+_n(\cR)$ and $\PM^+_n(\cC) \subset \cR^{2^{[n]}}$ denote the image of the symmetric and Hermitian PD cone respectively under this principal minor map. Because the image of the principal minors of positive definite matrices is a semialgebraic set, the theorems above imply that 
\[
\trop(\PM^+_n(\R)) = \overline{\nu(\PM^+_n(\cR))} \text{ and }
\trop(\PM^+_n(\C)) = \overline{\nu(\PM^+_n(\cC))}.
\]
\subsection{Discrete Concavity}  
For a set $S\subset [n]$ and an element $i\in [n]$, let us use the shorthand $Si$ for $S\cup \{i\}$ and $S\backslash i$ for the set difference $S\backslash \{i\}$.
A function $F: 2^{[n]} \rightarrow \R \cup \{-\infty\}$ is {\bf submodular} if for all $S, T \subset [n]$,
\[ F(S\cap T) + F(S\cup T) \leq F(S) + F(T).\]
If $F$ is submodular then its {\bf support}
\[ \supp(F) = \{ S \subset [n] \mid F(S) \neq -\infty\} \]
is a {\bf convex collection of sets}, meaning 
\begin{center}
    whenever $S \subset X \subset T$ and $S,T \in \supp(F)$, we have $X \in \supp(F)$.
\end{center} 
In fact, submodular functions can be ``locally'' characterized as follows.

\begin{proposition}
    A function $F: 2^{[n]} \rightarrow \R \cup \{-\infty\}$ is submodular if and only if its support is a convex collection of sets and for all $S\subset[n]$ and distinct $i,j \in [n]\setminus S$,
\[ F(S) + F(Sij) \leq F(Si) + F(Sj).\] 
\end{proposition}
\begin{proof}
These ``local'' inequalities are just the submodular inequalities obtained in the case the sets $S$ and $T$ have the same size and differ by only one element. Moreover, if $F$ is submodular, whenever $S \subset X \subset T$ and $S,T \in \supp(F)$, the submodular inequality $F(S) + F(T) \leq F(X) + F(S \cup (T\setminus X))$ implies that $X \in \supp(F)$, showing that $\supp(F)$ is a convex collection of sets.

For the converse, suppose $F: 2^{[n]} \rightarrow \R \cup \{-\infty\}$ has convex support and satisfies all the ``local'' submodular inequalities. Take $S, T \subset [n]$, and denote $\{s_1, s_2, \dots, s_k\} := S \setminus T$ and $\{t_1, t_2, \dots, t_l\} := T \setminus S$. 
We want to show that $F(S\cap T) + F(S\cup T) \leq F(S) + F(T)$. We can assume that $S\cap T, S \cup T \in \supp(F)$, as otherwise the inequality trivially holds.
For $0 \leq i \leq k$ and $0 \leq j \leq l$, denote $X_{i,j} = (S\cap T) \cup \{s_1, \dots ,s_i\} \cup \{t_1, \dots, t_j\}$, which is in $\supp(F)$ since this is a convex collection of sets.
For $1 \leq i \leq k$ and $1 \leq j \leq l$, we then have the local inequality 
$$F(X_{i-1,j-1}) + F(X_{i,j}) - F(X_{i,j-1}) -F(X_{i-1,j}) \leq 0.$$
Adding these inequalities over all values of $i$ and $j$ and canceling terms, we obtain
$F(X_{0,0}) + F(X_{k,l}) - F(X_{k,0}) -F(X_{0,l}) \leq 0$,
which is the desired inequality.
\end{proof}

%The {\em alcoved triangulation} of the cube $[0,1]^n$ is the triangulation into $n!$ simplices where each simplex is the convex hull of an edge path from $(0,0,\dots,0)$ to $(1,1,\dots,1)$ whose sum of coordinates is monotonously increasing along the path.  Equivalently, it is given by dicing the cube with the type-$A$ braid arrangement hyperplanes $x_i = x_j$.  This triangulation is ``dual'' to the permutohedron.
%Lov\'asz showed that a function of full support $2^{[n]}$ is submodular if and only if the regular subdivision of the unit cube induced by the {\em lower} convex hull is a coarsening of the alcoved triangulation~\cite{Lov}.  In particular, the edges in the lower convex hull correspond to pairs of subsets $S,T \subset[n]$ with $S \subset T$.  Thus the edges with directions $e_i - e_j$ cannot appear in the lower hull and can only appear in the upper hull. {\color{blue} This last discussion is for full support}

The notions of $M$-convex and $M^\natural$-convex functions were introduced by Murota and collaborators~\cite{Murota03}. $M$-concave functions are also called {\em valuated polymatroids}.
A function $F: 2^{[n]} \rightarrow \R \cup \{-\infty\}$ is an {\bf $M^\natural$-concave function} if for all $S, T \subset [n]$ and all $i \in S\backslash T$, either 
\begin{itemize}
	\item $F(S)+F(T) \leq F(S\backslash i) + F(Ti)$, or 
 \item there exists $j\in T\backslash S$ such that 
	$F(S)+F(T) \leq F(Sj\backslash i) + F(Ti\backslash j)$. 
\end{itemize}
Note that the support of an $M^\natural$-concave function is an {\bf $M^\natural$-convex collection of sets}, meaning
for all $S, T \in \supp(F)$ and all $i \in S\backslash T$, either 
\begin{itemize}
	\item $S\backslash i$ and $Ti$ are in $\supp(F)$, or 
 \item there exists $j\in T\backslash S$ such that 
	$Sj\backslash i$ and $Ti\backslash j$ are in $\supp(F)$. 
\end{itemize}
Every $M^\natural$-convex collection of sets is a convex collection of sets. Indeed, when $T \subset S$, the second case in the definition of an $M^\natural$-convex collection of sets is impossible, and thus we can reach any subset $X$ satisfying $T \subset X \subset S$ by repeatedly applying the first case for various elements $i \in S\setminus T$. However, not every convex collection of sets is $M^\natural$-convex, as exemplified, for instance, by the collection $\{\{1,2\},\{3,4\}\}$.
We will see some characterizations of $M^\natural$-concave functions below in Proposition~\ref{prop:Mconcave}.

Our interest in $M^\natural$-concave functions comes from the following fact.
\begin{proposition}\label{prop:TropPMconcavity}
	The tropicalization of the image of the positive definite cone under the principal minor map is a subset of the set of $M^\natural$-concave functions on $\{0,1\}^n$.  
\end{proposition}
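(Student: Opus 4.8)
The plan is to reduce the statement to a property of the coefficients of the polynomials $f_A$ and then carry that property through tropicalization. Put $\cR=\R\ps$ and $\cC=\cR(\ii)$. By Section~\ref{subsec:prelim} we have $\trop(\PM^+_n(\R))=\overline{\nu(\PM^+_n(\cR))}$ and likewise over $\C$, so it suffices to prove two things: (i) for every symmetric (resp.\ Hermitian) positive definite matrix $A$ over $\cR$ (resp.\ over $\cC$), the function
\[ w_A\colon\{0,1\}^n\to\R,\qquad \mathbf 1_S\mapsto\nu(A_S) \]
is $M^\natural$-concave; and (ii) the set of $M^\natural$-concave functions on $\{0,1\}^n$ is closed in $\R^{2^{[n]}}$, so that the Euclidean closures above do not leave it. I would settle (ii) at once from the definition: for each fixed $S,T\subseteq[n]$ and $i\in S\setminus T$, the functions $F$ satisfying ``$F(S)+F(T)\le F(S\setminus i)+F(Ti)$, or $F(S)+F(T)\le F(Sj\setminus i)+F(Ti\setminus j)$ for some $j\in T\setminus S$'' form a finite union of closed half-spaces, and the $M^\natural$-concave functions are the intersection of these finitely many closed sets.

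For (i), I would start from the fact recalled in the introduction that
\[ f_A=\det\bigl({\rm diag}(x_1,\dots,x_n)+yA\bigr)=\sum_{S\subseteq[n]}A_S\,\mathbf x^{[n]\setminus S}y^{|S|} \]
is a stable polynomial whose coefficients $A_S$ all lie in $\cR_{>0}$. Over $\R$ and $\C$ this is \cite{BB06}; it continues to hold over the valued fields $\cR$ and $\cC$, either because the relevant argument is elementary (if $z_1,\dots,z_n$ and $y$ all have positive imaginary part, then $\im\bigl({\rm diag}(z_1,\dots,z_n)+yA\bigr)={\rm diag}(\im z_1,\dots,\im z_n)+(\im y)A$ is the sum of a positive semidefinite matrix and the positive definite matrix $(\im y)A$, hence is positive definite, so ${\rm diag}(z_1,\dots,z_n)+yA$ is invertible and $f_A$ does not vanish), or because stability of $f_A$ for positive definite $A$ can be written as a first-order sentence over ordered fields that holds over $\R$ and hence over every real closed field. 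I would then invoke \cite{Branden2}, which identifies the valuations of the positive coefficients of a stable polynomial with an $M^\natural$-concave function on the lattice points of its Newton polytope. For $f_A$ the exponents are $\{(\mathbf 1_{[n]\setminus S},|S|):S\subseteq[n]\}$, and the bijection $(\mathbf 1_{[n]\setminus S},|S|)\mapsto\mathbf 1_S$ onto $\{0,1\}^n$ — obtained by discarding the last coordinate, redundant since $f_A$ is homogeneous of degree $n$, and then complementing all coordinates, an operation under which $M^\natural$-concavity is preserved — sends this coefficient-valuation function to $w_A$. Together with (ii), this proves the proposition.

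The step I expect to require the most care is the passage to \cite{Branden2}, since that result is customarily stated for homogeneous or multiaffine stable polynomials over $\R$ or $\C$, whereas $f_A$ is multiaffine in $x_1,\dots,x_n$ but of degree $n$ in $y$. I would bridge the multiaffinity gap by polarizing $y$ into $n$ new variables: polarization preserves stability and only rescales the coefficient of $\mathbf x^{[n]\setminus S}y^{|S|}$ by the positive rational number $\binom{n}{|S|}^{-1}$, which has valuation $0$; this reduces the claim to the multiaffine homogeneous case, where it asserts that the coefficient valuations form a valuated matroid on $2n$ elements, and projecting along the polarizing variables turns this $M$-concave function into the $M^\natural$-concave function $w_A$, exactly as in the characterization of $M^\natural$-concave functions in Proposition~\ref{prop:Mconcave}. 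To avoid relying on a valued-field version of this discrete-concavity result, one may instead work over $\R$ and $\C$: given a point $w$ of the logarithmic limit set, choose $t_m\to\infty$ and positive definite real (or Hermitian) matrices $A^{(m)}$ with $\log_{t_m}\bigl((A^{(m)}_S)_S\bigr)\to w$, apply the classical statement to each honest stable polynomial $f_{A^{(m)}}$ to see that $\log_{t_m}\bigl((A^{(m)}_S)_S\bigr)$ is $M^\natural$-concave, and conclude with the closedness from (ii).
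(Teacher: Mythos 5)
Your main line of argument is essentially the paper's own proof: the paper takes $A$ positive definite over $\R\ps$, notes that $\det({\rm diag}(x_1,\dots,x_n)+A)=\sum_{S}A_S\,{\bf x}^{[n]\setminus S}$ is stable with positive coefficients by \cite{BB06}, and concludes directly from the valued-field result of \cite{Branden2} that $(\nu(A_S))_S$ is $M^\natural$-concave. The one structural difference is that the paper works with this \emph{dehomogenized} polynomial (no $y$-variable), which is already multiaffine, so the polarization detour you introduce to handle the degree-$n$ variable $y$ is unnecessary; your extra observations (closedness of the set of $M^\natural$-concave functions, validity of stability over the valued fields $\cR$ and $\cC$) are correct and are left implicit in the paper.

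However, the fallback you propose in your last sentence is not correct: it is false that the vector $(\log_t A_S)_S$ of a single honest positive definite real matrix (equivalently, the log-coefficient vector of a single real stable polynomial with positive coefficients) is $M^\natural$-concave. For example, take $A$ with unit diagonal, $a_{12}=a_{13}=\tfrac1{10}$, $a_{23}=0$; then $A_1A_{23}=1$ while $A_\emptyset A_{123}=0.98$ and $A_2A_{13}=A_3A_{12}=0.99$, so for $F(S)=\log_t A_S$ the exchange condition with $S=\{1\}$, $T=\{2,3\}$, $i=1$ fails for every base $t>1$. The $M^\natural$ inequalities hold only in the tropical limit, i.e.\ at the level of valuations over $\cR$ or $\cC$ — this is precisely the phenomenon highlighted in the introduction and Section~\ref{sec:inequalities}, where the tropical relation cannot be lifted to an exact identity but only to inequalities with relaxed constants. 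So the valued-field version of Br\"and\'en's theorem (your primary route, and the paper's) is genuinely needed, and the ``apply the classical statement to each $f_{A^{(m)}}$ and pass to the limit'' shortcut does not work as stated; your argument stands only because the primary route does not rely on it.
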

\begin{proof}
	If $A \in \R\ps^{n \times n}$ is a positive definite matrix then, using \cite{BB06}, the polynomial
	$f= \det({\rm diag}(x_1,\hdots,x_n) + A) = \sum_{S\subseteq [n]} A_S {\bf x}^{[n]\backslash S}$ is stable and has positive coefficients, so $(\nu(A_S))_{S\subseteq [n]}$ is an $M^\natural$-concave function by \cite{Branden2}.
\end{proof}

\subsection{\texorpdfstring{$M^\natural$}--concavity and valuated matroids}
In this subsection we compile various characterizations of $M^\natural$-concave functions in terms of valuated matroids and polytope subdivisions.

For integers $1 \leq k \leq n-1$, the (affine) {\bf  Dressian} $\Dr(k,n)$ is the polyhedral subset of $(\R \cup \{-\infty\})^{\binom{n}{k}}$ consisting of all functions $p : \binom{[n]}{k} \to \R \cup \{-\infty\}$ such that for any subsets $S \in \binom{[n]}{k-1}$ and $T \in \binom{[n]}{k+1}$, 
\[
\max_{i \in T \setminus S} \, (p(Si)+p(T\backslash i)) \quad \text{is attained at least twice}.
\]
Points in the Dressian $\Dr(k,n)$ are, up to scaling, in one-to-one correspondence with 
rank-$k$ valuated matroids on the ground set $[n]$ or equivalently, $k$-dimensional tropical linear spaces in $\R^n$.

The {\bf affine flag Dressian} $\FlDr(n)$ is the polyhedral subset of $(\R \cup \{-\infty\})^{2^{[n]}}$ consisting of all functions $p : 2^{[n]} \to \R \cup \{-\infty\}$ such that for any subsets $S,T \subset [n]$ with $|S| \leq  |T| - 2$, 
\[
\max_{i \in T \setminus S} (p(Si) + p(T\backslash i)) \quad \text{is attained at least twice}.
\]
When $|S| = |T| - 2$ the conditions above are simply the Pl\"ucker relations on the points $(p(S))_{S \in \binom{[n]}{k}}$ with $k = |S|+1$, while when $|S| < |T| - 2$ they are incidence relations among the corresponding tropical linear spaces. Flag Dressians have been studied in \cite{BEZ}.

For a function $F : 2^{[n]} \to \R \cup \{-\infty\}$, its {\bf multisymmetric lift} is the function
$\hat F : \binom{[2n]}{n} \to \R \cup \{-\infty\}$ given by 
$$\hat F (T) := F(T \cap [n]).$$   
The homogenization of the $k$ to $k+1$ layer is the function $\tilde F_k: \binom{n+1}{k+1} \to \R \cup \{-\infty\}$ given by
  $$\tilde F_k (S) := \begin{cases}
  F(S) & \text{if } n+1 \notin S \\
  F(S\backslash\!(n+1)) & \text{if } n+1 \in S.
  \end{cases}$$

A function $F: 2^{[n]} \rightarrow \R \cup \{-\infty\}$ induces a {\em regular subdivision} of the $0/1$ polytope 
\[\textstyle P_{\supp(F)} = \conv \{e_S \in \{0,1\}^n \mid S \in \supp(F)\}, \quad \text{where } e_S = \sum_{i\in S} e_i,\]
by lifting in a new dimension each of the vertices $e_S$ of $P_{\supp(F)}$ to height $F(S)$, and then projecting back to $\R^n$ the upper convex hull of the lift. 

% swapping (3) and (4) from the arXiv version
\begin{proposition}
	\label{prop:Mconcave}
	For a function $F : 2^{[n]} \rightarrow \R \cup \{-\infty\}$, the following are equivalent.
	\begin{enumerate}
		
  \item The function $F$ is $M^\natural$-concave.

  \item The multisymmetric lift $\hat F$ belongs to the Dressian $\Dr(n,2n)$.

  \item The collection $\supp(F)$ is $M^\natural$-convex, $F$ is submodular, and it belongs to the affine flag Dressian $\FlDr(n)$.
  
  \item The collection $\supp(F)$ is $M^\natural$-convex, $F$ is submodular, and for every $1 \leq k \leq n-2$, the homogenized layer~$\tilde F_k$ belongs to the Dressian $\Dr(k+1,n+1)$.

\item The function $F$ induces a regular subdivision of the polytope $P_{\supp(F)}$ (via upper hull) in which every edge has the form $e_i - e_j$ or $e_i$.
	\end{enumerate}
\end{proposition}

\begin{proof}
The equivalence $(1) \iff (2)$ follows from \cite[Proposition 1.4]{GRSU}.

The equivalence $(1) \iff (3)$ can be found, for instance, in \cite[Theorem 3.2]{murota2016} and \cite[Theorem 10]{RGP}.
%, which says that $F$ is $M^\natural$-concave
%if and only if $F$ is submodular and
% for all $S\subset[n]$ and $i,j,k,l \in [n] \setminus S$, each of the maxima
%	\begin{align*}
%    &\max\{F(Sij) + F(Sk) ,  F(S ik) + F(Sj),   F(Sjk) + F(Si) \}\\
%    &\max\{F(Sij) + F(Sk\ell) ,  F(Sik) + F(Sj\ell),   F(Sjk) + F(Si\ell) \}
%	\end{align*}
%	is attained at least twice. 

The equivalence $(3)\iff (4)$ follows from \cite[Theorem~5.1.2]{BEZ}.
 
The equivalence $(1) \iff (5)$ holds more generally for functions on subsets of $\Z^n$.  By
  \cite[Theorem 6.30]{Murota03}, a function is $M$-concave on a finite subset of $\Z^n$ with a fixed coordinate sum if and only if every cell in the regular subdivision induced by the function via the upper hull is an $M$-convex set. By \cite[Theorem 4.15]{Murota03} and \cite[Lemma 2.3]{MUWY},  $M$-convex sets are integer points in generalized permutohedra, which are polyhedra with edges in directions $e_i - e_j$. 
Projecting out one of the coordinates gives the desired result for $M^\natural$-concave functions.
\end{proof}

\begin{example}\label{ex:3x3}
    Consider the matrix
    \[
    A = \begin{pmatrix}
        1 & 1 & 1 \\ 1 & 1+t^4 & 1+t^3 \\ 1 & 1 + t^3 & 1 + t^2 + t^4
    \end{pmatrix} =B^T B \ \ \text{ where } \ \ B = \begin{pmatrix}
        1 & 1 & 1 \\ 0 & t^2 & t \\ 0 & 0 & t^2
    \end{pmatrix}.
    \]
    The matrix $A$ is positive definite since $A = B^T B$ with $\det(B)\neq 0$. For $w_S = -\val(A_S)$, we have $w_{\emptyset} = w_1=w_2=w_3 = 0$, $w_{12} =-4$, $w_{13}=w_{23} = -2$, and $w_{123} = -8$. The function $S\mapsto w_S$ is $M^\natural$-concave and induces the regular subdivision of the cube shown in Figure~\ref{fig:cube}.\end{example}

     \begin{figure}    \begin{center}
		\begin{tikzpicture}
                \fill[red!20!white] (0,0,0)--(0,2,0)--(2,2,2)--(2,0,2)--(0,0,0);
                \fill[red!20!white] (2,0,0)--(0,2,0)--(2,2,2)--(2,0,0);
                \fill[red!20!white] (0,0,0)--(2,0,2)--(0,2,2)--(0,0,0);
			\draw[black,line width=2pt] (0,2,0) -- (2,2,0) -- (2,2,2) -- (0,2,2) -- (0,2,0) --cycle;
			\draw[black,line width=2pt] (0,2,2) -- (2,2,2) -- (2,0,2) -- (0,0,2) -- (0,2,2) --cycle;
			\draw[black,line width=2pt] (2,2,2) -- (2,2,0) -- (2,0,0) -- (2,0,2) -- (2,2,2) --cycle;
			\draw[black, line width=2pt,dashed] (0,0,0)--(0,0,2);
			\draw[black, line width=2pt,dashed] (0,0,0)--(2,0,0);
			\draw[red, line width=2pt,dashed] (0,0,0)--(0,2,0) ;
                \draw[red, line width=2pt,dashed] (0,0,0)--(0,2,2) ;
                 \draw[red, line width=2pt,dashed] (2,0,0)--(0,2,0) ;
                 \draw[red, line width=2pt] (2,0,0)--(2,2,2) ;
                \draw[red, line width=2pt] (2,0,2)--(0,2,2) ;
			\draw[red, line width=2pt] (0,2,0)--(2,2,2) ;
			\draw[red, line width=2pt] (2,2,2)--(2,0,2) ;
			\draw[red, line width=2pt,dashed] (0,0,0)--(2,0,2) ;
			\node[scale=0.7] at (-0.2,0,2.5) {$w_{0}=$ \textcolor{red}{$0$}};
			\node[scale=0.7] at (2.5,0,2.5) {$w_{1}=$ \textcolor{red}{$0$}};
			\node[scale=0.7] at (-0.4,2,2.5) {$w_{3}=$ \textcolor{red}{$0$}};
			\node[scale=0.7] at (3,2.2,2.6) {$w_{13}=$ \textcolor{red}{$-2$}};
			\node[scale=0.7] at (-0.3,-0.2,0) {$w_{2}=$ \textcolor{red}{$0$}};
			\node[scale=0.7] at (2.7,0,0) {$w_{12}=$ \textcolor{red}{$-4$}};
			\node[scale=0.7] at (0,2.2,0) {$w_{23}=$ \textcolor{red}{$-2$}};
			\node[scale=0.7] at (2.4,2.2,0){$w_{123}=$ \textcolor{red}{$-8$}};
		\end{tikzpicture}
   \end{center}
 \caption{The regular subdivision induced by the matrix in Example~\ref{ex:3x3}.\\
 }\label{fig:cube}
    \end{figure}
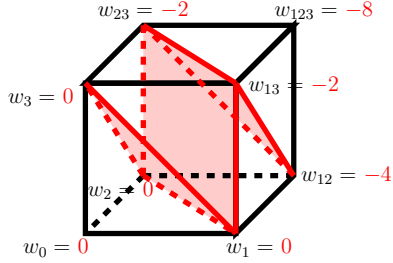

We call a function $F:2^{[n]}\to \R \cup \{-\infty\}$ {\bf strictly submodular} if  
\[F(S) + F(Sij) < F(Si) +F(Sj) \quad \text{whenever } F(S) + F(Sij) \neq -\infty\]
for all $S\subset [n]$ and distinct $i,j\in [n]\backslash S$.

\begin{lemma}\label{lem:layers}
Let $F:2^{[n]}\to \R \cup \{-\infty\}$ be $M^{\natural}$-concave. The function $F$ is strictly submodular if and only if each cell of the regular subdivision of $P_{\supp(F)}$ induced by $F$ via upper hull is contained in $\{x \in [0,1]^n : k\leq\sum_{i=1}^nx_i\leq k+1\}$ for some $k=0,\hdots, n-1$.
\end{lemma}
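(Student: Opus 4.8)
The plan is to relate strict submodularity directly to which edges can appear in the upper-hull subdivision. By Proposition~\ref{prop:Mconcave}(5), every edge of the regular subdivision induced by $F$ via the upper hull has direction $e_i - e_j$ or $e_i$; an edge of direction $e_i-e_j$ joins a vertex $\chi_{Si}$ to $\chi_{Sj}$ for some $S \subseteq [n]\setminus\{i,j\}$, and so it lies inside a single layer $\{k \le \sum_t x_t \le k+1\}$ (in fact on the hyperplane $\sum_t x_t = |S|+1$), while an edge of direction $e_i$ joins $\chi_S$ to $\chi_{Si}$ and crosses between consecutive layers. Hence every cell of the subdivision is contained in a single layer $\{k \le \sum_t x_t \le k+1\}$ \emph{if and only if} no edge of direction $e_i$ appears in the subdivision: indeed, if all edges have direction $e_i - e_j$, each cell lies in an affine hyperplane $\sum_t x_t = \text{const}$, which forces it into some layer; conversely, an edge $\chi_S\chi_{Si}$ is a cell that is not contained in any single layer unless it coincides with one, which it does not since it properly crosses a layer boundary. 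So the content of the lemma is the equivalence: \emph{$F$ is strictly submodular $\iff$ the upper-hull subdivision contains no edge of direction $e_i$.}

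For the forward direction, suppose $F$ is strictly submodular and, for contradiction, the upper hull contains the edge from $\chi_S$ to $\chi_{Si}$ for some $i \notin S$. Being an edge of the upper hull means there is a linear functional $\ell$ on $\R^n$ such that the face of the lifted polytope $\{(\chi_T, F(T))\}$ maximizing $(x,z)\mapsto \ell(x)+z$ is exactly the segment $\{(\chi_S,F(S)),(\chi_{Si},F(Si))\}$; in particular for every $T\ne S, Si$ we have $\ell(\chi_T)+F(T) < \ell(\chi_S)+F(S) = \ell(\chi_{Si})+F(Si)$. Fix any $j \in [n]\setminus(Si)$ — such a $j$ exists as long as $|Si| < n$; one has to handle the top layer separately, but there the only possible edge of direction $e_i$ is $\chi_{[n]\setminus i}\chi_{[n]}$, and the same argument applies with the roles of "adding" and "removing" swapped. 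Applying the inequality to $T = Sj$ and $T = Sij$ and adding, the $\ell$-terms telescope (since $\ell(\chi_{Sj})+\ell(\chi_{Sij}) = \ell(\chi_S)+\ell(\chi_{Si})$), yielding $F(Sj)+F(Sij) < F(S)+F(Si)$, i.e. $F(S)+F(Sij) > F(Si)+F(Sj)$, contradicting strict submodularity. Hence no edge of direction $e_i$ appears, and by the first paragraph every cell lies in a single layer.

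For the converse, suppose every cell lies in a single layer, equivalently no edge of direction $e_i$ appears in the upper hull, and suppose for contradiction that strict submodularity fails: $F(S) + F(Sij) \ge F(Si) + F(Sj)$ for some $S$ and distinct $i,j \notin S$. Since $F$ is $M^\natural$-concave it is submodular (Proposition~\ref{prop:Mconcave} together with the definition), so in fact $F(S)+F(Sij) = F(Si)+F(Sj)$; that is, the four points $\chi_S, \chi_{Si}, \chi_{Sj}, \chi_{Sij}$ lift to four \emph{coplanar} points forming a planar parallelogram (a $2$-face of the cube $[0,1]^n$ with vertices lifted into an affine plane). The key point is that a planar quadrilateral region appearing in a regular subdivision must be a union of cells of that subdivision, and whenever such a square face lifts flat, the induced subdivision restricted to that square is either the square itself or one of its two diagonal triangulations; the diagonal $\chi_S\chi_{Sij}$ has direction $e_i+e_j$, which is not among the allowed edge directions, so the subdivision on this square must either keep it as a single cell or split it along the diagonal $\chi_{Si}\chi_{Sj}$ of direction $e_i - e_j$. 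In the first case the square $\mathrm{conv}(\chi_S,\chi_{Si},\chi_{Sj},\chi_{Sij})$ is itself a cell of the subdivision, but it is not contained in any single layer — contradiction. In the second case the cell on the side of $\chi_S$ is the triangle $\mathrm{conv}(\chi_S,\chi_{Si},\chi_{Sj})$, whose edge $\chi_S\chi_{Si}$ has direction $e_i$ and crosses a layer boundary — contradiction. Either way we contradict the hypothesis, so $F$ must be strictly submodular.

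The main obstacle I anticipate is the bookkeeping in the converse: one has to argue carefully that the flat parallelogram $\mathrm{conv}(\chi_S,\chi_{Si},\chi_{Sj},\chi_{Sij})$ is genuinely a subcomplex of the regular subdivision (not cut by unrelated faces in a way that avoids the listed cases), and that the allowed-edge-direction constraint from Proposition~\ref{prop:Mconcave}(5) really does exclude the $e_i+e_j$ diagonal. This is where one should be precise about how regular subdivisions restrict to faces of the underlying polytope — restricting the lifting function to a face of $[0,1]^n$ induces the regular subdivision of that face — and then invoke the classification of regular subdivisions of a unit square. The edge-crossing argument for "edges $e_i$ cross layer boundaries, edges $e_i-e_j$ stay inside a layer" is elementary; the forward direction's telescoping identity for the upper-hull certificate $\ell$ is the other routine ingredient. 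I would also make explicit at the outset the equivalence "every cell in one layer $\iff$ no $e_i$-edge" to cleanly separate the combinatorial-geometric content from the inequality manipulation.
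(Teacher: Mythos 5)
Your central reduction is false, and it sinks the forward direction. You claim that ``every cell lies in a single layer $\iff$ no edge of direction $e_i$ appears in the upper-hull subdivision.'' But an edge from $e_S$ to $e_{Si}$ is itself contained in the single layer $\{x : |S|\leq \sum_t x_t\leq |S|+1\}$, so such edges do not violate the layer condition at all; moreover edges of direction $e_i$ are unavoidable: every full-dimensional cell of a subdivision of $[0,1]^n$ must have some edge not of the form $e_i-e_j$ (a polytope all of whose edges have direction $e_i-e_j$ lies in a hyperplane $\sum_t x_t=\mathrm{const}$), and the cube's own $e_i$-edges are always faces of the subdivision. So the right-hand side of your equivalence never holds, while the layer condition does hold for strictly submodular $M^\natural$-concave functions (e.g.\ Example~\ref{ex:3x3}). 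Your forward argument therefore purports to prove a false statement, and indeed it only appears to work because of the erroneous identity $\ell(e_{Sj})+\ell(e_{Sij})=\ell(e_S)+\ell(e_{Si})$: since $e_{Sj}+e_{Sij}=e_S+e_{Si}+2e_j$, the two sides differ by $2\ell_j$, which need not vanish; the subsequent rewriting of $F(Sj)+F(Sij)<F(S)+F(Si)$ as $F(S)+F(Sij)>F(Si)+F(Sj)$ is also not a valid rearrangement. What the forward direction actually requires is ruling out a cell $C$ containing two vertices $e_S, e_T$ with $|T|\geq |S|+2$. The paper does this by taking $e_S\in C$ with $|S|$ minimal, using the allowed edge directions from Proposition~\ref{prop:Mconcave}(5) to produce $e_{Si}\in C$ and then some $e_{Sij}\in C$, observing that $[e_S,e_{Sij}]$ (direction $e_i+e_j$) cannot be an edge so that $e_{Sj}\in C$ as well, and finally using that the lifts of the four points in one upper face are coplanar, forcing $F(S)+F(Sij)=F(Si)+F(Sj)$ and contradicting strictness. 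Nothing in your write-up supplies an argument of this kind, so this direction is a genuine gap.

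Your converse direction is essentially right and close to the paper's: if strict submodularity fails, then submodularity (which follows from $M^\natural$-concavity) forces $F(S)+F(Sij)=F(Si)+F(Sj)$, the square face of the cube on $e_S,e_{Si},e_{Sj},e_{Sij}$ lifts flat, and since a regular subdivision restricts on each face of the cube to the regular subdivision induced by the restricted heights, that square is a single cell, which is not contained in any one layer---contradiction. But note that your ``second case'' (splitting along the diagonal $e_{Si}e_{Sj}$) cannot occur when the lift is flat, and the contradiction you draw from it is spurious for the same reason as above: the triangle $\conv(e_S,e_{Si},e_{Sj})$, and in particular the edge $e_Se_{Si}$, are contained in a single layer. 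So the converse can be cleaned up easily, but the forward direction needs to be replaced by an argument about cells spanning two layers, as in the paper.
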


\begin{proof}
    ($\Leftarrow$) Suppose that every cell of the subdivision induced by $F$ has the desired form. Consider $S\subset[n]$ with $|S|=k \leq n-2$ and $i,j\in [n]\backslash S$ such that $e_S, e_{Sij} \in P_{\supp(F)}$. 
    By assumption, no cell of the subdivision can contain both points $e_{S}$ and $e_{Sij}$. Therefore, on the square with vertices $e_{S}, e_{Si}, e_{Sj}, e_{Sij}$, $F$ must induce the subdivision with an edge between $e_{Si}$ and $ e_{Sj}$. It follows that $F(Si)+F(Sj)>F(S)+F(Sij)$.

    ($\Rightarrow$) Suppose that $F$ is strictly submodular and suppose, for the sake of contradiction, that a cell $C$ of the subdivision induced by $F$ is not contained in a slice of the cube with the desired form. Let $S\subset [n]$ be of minimum size $|S|=k$ with $e_S\in C$. Let $|x|$ denote $\sum_i x_i$. By assumption, $\max\{|x|: x\in C\}\geq k+2$. 
    Since $e_S$ is a vertex of $C$, there is some edge 
    $[e_S, e_S+v]$ of $C$
    for which $|e_S+v|>|e_S|$. By $M^{\natural}$-concavity, this edge direction must be parallel to $e_i$ for some $i\in [n]\backslash S$. Then $e_{Si}$ is also a vertex of $C$. Since this does not achieve the maximum of $|x|$ over $C$, by the same reasoning, there exists some $j\in [n]\backslash(Si)$ for which $e_{Sij}\in C$. 
    By $M^{\natural}$-concavity, $[e_S, e_{Sij}]$ cannot be an edge of $C$, implying that $e_{Sj}\in C$ as well. 
    The only way for these four points to belong to the same cell $C$ is for $F(Si)+F(Sj) = F(S)+F(Sij)$, contradicting the strict submodularity of $F$.
\end{proof}

\begin{remark}
    The assumption of $M^{\natural}$-concavity is necessary in Lemma~\ref{lem:layers}. For example, consider the function $F:2^{[4]}\to \R$ defined by 
$F(\emptyset)=F([4])=-6$, $F(i)= -3$ for $i\neq 1$ and $F(1)=0$, $F(S)=-1$ for $|S|=2$, and $F(S)=-3$ for $|S|=3$ with $S\neq 234$ and $F(234)=0$. One can check that $F$ is strictly submodular but that the edge $[e_1, e_{234}]$ appears in the induced subdivision via the upper hull. 
\end{remark}

\section{Tropical Grassmannians and  Tropical Flag Varieties}\label{sec:FlagVariety}

In this section  we will show a new relationship between tropical Grassmannians and tropical flag varieties, analogous to the equivalence $(2)\iff(3)$ in Proposition~\ref{prop:Mconcave}.   

The (complete) flag variety is a subvariety of a product of projective spaces $\prod_{k=1}^n \mathbb{P}^{\binom{n}{k}-1}$. 
    An element $(p_k)_{k \in [n]}$ belongs to the flag variety if there is a complete flag of linear spaces $\{0\} = L_0 \subset L_1\subset L_2\subset \hdots \subset L_n = \cK^n$ so that $p_k$ is the vector of Pl\"ucker coordinates of $L_k$.  
    The (complete) {\bf affine flag variety}  is the variety 
    \({\rm Fl}_{\cK}(n) \subset \cK^{2^n} = \prod_{k=0}^n K^{\binom{n}{k}}\) consisting of vectors $(q_S)_{S\subseteq [n]}$ which, when considered as a sequence $(q_\varnothing) , (q_S : |S|=1) , (q_S : |S|=2) , \dots , (q_S : |S|=n-1) , (q_{[n]})$, belong to the flag variety. Each component $(q_S : |S|=k)$ can be scaled independently by nonzero elements of $\cK$.  In particular we allow $(q_\varnothing)$ and $(q_{[n]})$ to take arbitrary values in $\cK^*$.

Consider the Pl\"ucker embedding of the partial flag variety 
  \[\{(L_k, L_{k+1})\in \Gr_{\cK}(k,n)\times \Gr_{\cK}(k+1,n): L_k\subset L_{k+1}\}. 
 \] 
 Its image under the homogenizing map taking $q_S$ to $q_{S(n+1)}$ if $|S|=k$ and to $q_S$ if $|S|=k+1$ coincides with the Pl\"ucker embedding of  $\Gr_{\cK}(k+1,n+1)$. 
 An analogous homogenization appeared in Proposition~\ref{prop:Mconcave}(3). 
 Concretely, if $L_k$ and $L_{k+1}$ are the span of the top $k$ and $k+1$ rows of a $(k+1)\times n$ matrix $M$, then the corresponding  subspace in $\Gr_{\cK}(k+1,n+1)$ is obtained as the row span of the $(k+1)\times (n+1)$ matrix  obtained by appending the column $e_{k+1}$ to $M$. 

 Let $\nu({\rm Fl}_{\cK}(n)) \subset \Gamma^{2^{[n]}} \subset \R^{2^{[n]}}$ be the $\nu$ values of the points in the affine flag variety  ${\rm Fl}_{\cK}(n)$, none of whose Pl\"ucker coordinates are zero.  This set has lineality space containing the tropical scaling of each factor; that is, for any $w:2^{[n]}\to \R$ in $\nu({\rm Fl}_{\cK}(n))$ and any vector of scalars $\lambda = (\lambda_0, \hdots, \lambda_n)\in \Gamma^{n+1}$,
 the function $\lambda\cdot w: 2^{[n]}\to \R$ given by
\begin{equation}\label{eq:tropScaling}
    (\lambda\cdot w)(S) =  \lambda_{|S|} + w(S)
\end{equation}
 also belongs to $\nu({\rm Fl}_{\cK}(n))$.
The following lemma says that the tropical flag variety $\nu({\rm Fl}_{\cK}(n))$ can be recovered from its intersection with the submodular cone using lineality.

\begin{lemma}
\label{lem:submodular}
For any $w:2^{[n]}\to \R$, there are tropical scalars $\lambda = (\lambda_0, \hdots, \lambda_n)\in \Gamma^{n+1}$ so that $\lambda\cdot w$ is strictly submodular. Moreover, if $w$ is already submodular and $\Gamma$ is dense in $\R$, then $\lambda$  can be chosen to be arbitrarily small. 
\end{lemma}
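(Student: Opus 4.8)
The plan is to construct the tropical scalars $\lambda = (\lambda_0, \ldots, \lambda_n)$ layer by layer, exploiting the fact that strict submodularity only constrains triples of layers $k-1, k, k+1$ — actually only consecutive pairs through the local inequality $F(S) + F(Sij) < F(Si) + F(Sj)$ — so we may fix $\lambda_0 = 0$ and choose $\lambda_k$ inductively to make the step from layer $k-1$ to layer $k$ to layer $k+1$ strict. First I would reduce to making the \emph{local} strict submodular inequalities hold, since by the remark after submodularity (local submodularity implies submodularity), and since the same telescoping argument shows local strict submodularity implies the global strict version, it suffices to arrange $(\lambda\cdot w)(S) + (\lambda\cdot w)(Sij) < (\lambda\cdot w)(Si) + (\lambda\cdot w)(Sj)$ for all $S$ with $|S| = k-1$ and distinct $i,j \notin S$. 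Writing this out, the $\lambda$ contributions are $\lambda_{k-1} + \lambda_{k+1}$ on the left and $2\lambda_k$ on the right, so the inequality becomes
\[
w(S) + w(Sij) - w(Si) - w(Sj) < 2\lambda_k - \lambda_{k-1} - \lambda_{k+1}.
\]
Letting $c_k := \max_{|S|=k-1,\, i\neq j \notin S}\bigl(w(S) + w(Sij) - w(Si) - w(Sj)\bigr)$ be the (finite) worst-case defect at layer $k$, the requirement is simply $2\lambda_k - \lambda_{k-1} - \lambda_{k+1} > c_k$ for each $k = 1, \ldots, n-1$.

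The key step is then to solve this system of $n-1$ strict linear inequalities in the unknowns $\lambda_0, \ldots, \lambda_n$ over the value group $\Gamma$. Since $\Gamma$ is a nontrivial dense subgroup of $\R$ (the valuation is nontrivial and... actually, even if $\Gamma$ is only nontrivial we can scale), I would first solve it over $\R$: set $\lambda_0 = \lambda_n = 0$ and choose the interior values $\lambda_k$ to be large and concave, e.g. $\lambda_k = -M k(k-n)$ for a large constant $M$; then $2\lambda_k - \lambda_{k-1} - \lambda_{k+1} = 2M$ for all $k$, which exceeds every $c_k$ once $M$ is large enough. This gives a real solution; to land in $\Gamma^{n+1}$, I would perturb: because the inequalities are strict and open, and because $\Gamma$ is dense in $\R$ (the hypothesis on $\cK$ guarantees the valuation is nontrivial, and in the cases of interest — real closed or algebraically closed fields — the value group is divisible hence dense), any real solution can be approximated coordinatewise by a $\Gamma$-valued one still satisfying the strict inequalities. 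If only nontriviality of $\Gamma$ is assumed, I would instead note the system has a solution with all $\lambda_k$ in $\Q_{\geq 0}$ times a single fixed element $\gamma \in \Gamma_{>0}$ after clearing denominators and scaling $M$ — this needs $\Gamma$ to contain arbitrarily large elements, which nontriviality plus the subgroup property gives.

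For the ``moreover'' clause: if $w$ is already submodular, then every defect $c_k \leq 0$, so the inequalities $2\lambda_k - \lambda_{k-1} - \lambda_{k+1} > c_k$ are satisfied by $\lambda = 0$ with strict slack (strict, since $c_k < 0$ would be needed — but $c_k$ could be $0$). When $c_k = 0$ we need a strictly positive quantity $2\lambda_k - \lambda_{k-1}-\lambda_{k+1}$, so take $\lambda_k = \varepsilon\, k(n-k)$ for a small $\varepsilon > 0$; this yields $2\lambda_k - \lambda_{k-1} - \lambda_{k+1} = 2\varepsilon > 0 \geq c_k$, and $\|\lambda\|$ is $O(\varepsilon)$, hence arbitrarily small, and density of $\Gamma$ lets us take $\varepsilon \in \Gamma$. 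The main obstacle I anticipate is the bookkeeping around the value group: making sure the chosen $\lambda_k$ genuinely lie in $\Gamma$ rather than merely in $\R$ in the general (merely nontrivial) case, and confirming that the local-to-global strict submodularity implication goes through — the latter is a standard telescoping sum over a chain refining a pair $(S\cap T, S\cup T)$, summing strict local inequalities, which I would state as a short preliminary observation before the main construction.
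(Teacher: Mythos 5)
Your proof is correct and follows essentially the same route as the paper: both reduce strict submodularity to the layer-wise conditions $2\lambda_k-\lambda_{k-1}-\lambda_{k+1} > \bigl(\text{worst submodularity defect at layer } k\bigr)$, the paper choosing the $\lambda_k$ by a greedy induction while you use the explicit concave sequence $\lambda_k = Mk(n-k)$ (resp.\ $\varepsilon\, k(n-k)$, with $\varepsilon\in\Gamma$ small by density, for the ``moreover'' clause), and both handle membership in $\Gamma$ the same way in spirit. Note only that your aside about deducing a ``global'' strict version by telescoping is unnecessary --- the paper defines strict submodularity by exactly the local inequalities you verify --- and such a global strict statement would in any case fail for nested pairs $S\subseteq T$, where the submodular inequality is an equality.
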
 

\begin{proof}
 Let $\lambda_0=\lambda_1=0$ and for each $k=2, \hdots, n$, inductively choose $\lambda_k$ so that 
$$\lambda_k < \min_{\stackrel{S \in {\binom{[n]}{k}}}{i,j \in S}} \ w(S\backslash i) + w(S\backslash j) +2\lambda_{k-1} - w(S\backslash \{i,j\})- w(S)  - \lambda_{k-2},$$
which implies that $\lambda \cdot w$ is strictly submodular.

If $w$ is already submodular, then $ w(S\backslash i) + w(S\backslash j)  - w(S\backslash \{i,j\})- w(S)\geq 0$. We can choose $\lambda_0=\lambda_1=0$ and $\lambda_2 = -\varepsilon$ where $\varepsilon > 0$ is arbitrarily small. For $k \geq 3$ we can choose $\lambda_k$ to satisfy $3\lambda_{k-1} <\lambda_k<\min\{0,2\lambda_{k-1}  - \lambda_{k-2}\}$.
Inductively, this gives $|\lambda_k|< 3^{k-1} \varepsilon$, thus $\lambda$ can be arbitrarily small.
\end{proof}

Our main result in this section, stated below, says that the realizable cases of the equivalence $(2)\iff(3)$ in Proposition~\ref{prop:Mconcave} agree.  
In the next section we will use this to describe the tropicalization of the set of principal minors of positive definite matrices, when $\cK$ is either $\cR$ or $\cC$, as a linear slice of the tropical Grassmannian $\trop({\rm Gr}_{\cK}(n, 2n))$ and as the intersection of the tropical affine flag variety $\trop({\rm Fl}_{\cK}(n))$ with the submodular cone.  However, the result below holds for more general fields $\cK$ and may be of independent interest.

\begin{theorem}\label{thm:TropGrassFlagGrass_AnyField} 
Let $\cK$ be a field with a nonarchimedean valuation with a splitting, value group $\Gamma$, and an infinite residue field.
For a function $F: 2^{[n]}\rightarrow \Gamma$, the following are equivalent:
   \begin{enumerate}
    \item The multisymmetric lift $\hat{F}$ is the valuation of a point in the Grassmannian ${\rm Gr}_{\cK}(n, 2n)$.
    \item The function $F$ is submodular and is the valuation of a point in the affine flag variety ${\rm Fl}_{\cK}(n)$.
    \end{enumerate}   
%    
%    For any positive integer $n$ we have
%	\[
%    \nu({\rm Gr}_{\cK}(n, 2n)) \cap \Ln
%    \ \  = \ \
%   \nu({\rm Fl}_{\cK}(n)) \cap {\rm SUBMOD}_n.
%    \]
%    If $\cK$ is algebraically closed or real closed, we thus have
%    	\begin{equation}\label{eq:mainset}
%    \trop({\rm Gr}_{\cK}(n, 2n)) \cap \Ln
%    \ \  = \ \
%   \trop({\rm Fl}_{\cK}(n)) \cap {\rm SUBMOD}_n.
%    \end{equation}
\end{theorem}

\begin{proof}[Proof of Theorem~\ref{thm:TropGrassFlagGrass_AnyField}($(1)\implies(2)$)]
 Since both %$\nu({\rm Gr}_{\cK}(n, 2n)) \cap \Ln$ and $\nu({\rm Fl}_{\cK}(n)) \cap {\rm SUBMOD}_n$ 
 statements are invariant under global tropical scaling (i.e.\ adding a constant function), we may assume that the function $F: 2^{[n]} \to \Gamma$ satisfies $F(\varnothing)=0$.
Suppose $M \in \cK^{n\times 2n}$ is such that the valuation of its $n\times n$ minors satisfy $\nu(M([n],\tilde T)) = \hat F(\tilde T) = F(\tilde T \cap [n])$ for all $\tilde T \in \binom{[2n]}{n}$. The submatrix $M([n],\{n+1,\dots,2n\})$ has valuation $0$ by our assumption, so we can assume that $M$ has the form $\begin{pmatrix}B & I\end{pmatrix}$ for some $n\times n$ matrix ${B}$ and the $n \times n$ identity matrix $I$. Since the valuation of an $n\times n$ minor of $M$ with columns indexed by $\tilde T$ depends only on the intersection $\tilde T \cap [n]$, the valuation of any minor ${B}(T,S)$ is determined by $S$ independently of $T$.
Consider the flag $L_1\subset L_2\subset \dots \subset L_n = \cK^n$ where $L_k$ is the span of the first $k$ rows of ${B}$. Its Pl\"ucker coordinates are given by the minors $({B}(\{1,\hdots, k\}, S): S\in \binom{[n]}{k}) = (F(S): S\in \binom{[n]}{k})$, which shows that $F$ is the valuation of a point in ${\rm Fl}_{\cK}(n)$.
%, which are equal to the corresponding $n\times n$ minors of $M$.     
%This shows that the valuation of the $n\times n$ minors of $M$ belongs to $\nu({\rm Fl}_{\cK}(n))$. 
Moreover, since ${\rm Gr}_{\cK}(n, 2n)$ is contained in the Dressian
${\rm Dr}(n, 2n)$, 
submodularity follows from the $(2)\implies(3)$ part of Proposition~\ref{prop:Mconcave}.
\end{proof}

For the reverse implication, we need the following technical lemmas.  Recall that $\nu$ is the negative of the valuation.

\begin{lemma}
    \label{lem:TechnicalLemma}
    Let $B\in \cK^{n\times n}$ be an upper triangular matrix 
    such that the function $2^{[n]} \rightarrow \R$ given by  $S\mapsto \nu(B(\{1,\hdots, |S|\}, S))$ is submodular. Then 
	\begin{equation}\label{eq:topJustifiedIneq2}
		\nu(B(\{1,\hdots, |S|\}, S)) \geq \nu(B(T, S))
	\end{equation}
	for all $S,T \subset [n]$ with $|S|=|T|$. 
\end{lemma}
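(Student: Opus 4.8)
The plan is to fix $S\subseteq[n]$ and prove the inequality
$$\nu(B(\{1,\dots,|S|\},S))\ \geq\ \nu(B(T,S))$$
for all $T$ with $|T|=|S|$, by inducting on how far $T$ differs from the ``initial'' index set $\{1,\dots,|S|\}$. Write $k=|S|$ and $I_k=\{1,\dots,k\}$. Since $B$ is upper triangular, many of these minors will vanish; the point is that the submodularity of $F(S):=\nu(B(I_{|S|},S))$ forces the initial minor to dominate.

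\textbf{Key steps.} First, I would observe that because $B$ is upper triangular, $B(T,S)=0$ whenever $T$ is not ``dominated'' by $S$ in the sense that the $j$-th smallest element of $T$ exceeds the $j$-th smallest element of $S$ for some $j$; in particular $B(I_k,S)\neq 0$ is possible only when $S\supseteq$ the first $k$ rows can reach $S$, and among all $T$ of size $k$ with $B(T,S)\neq 0$, the set $I_k$ is the coordinatewise-smallest. This reduces the claim to comparing $\nu(B(T,S))$ for those $T$ with nonzero minor against $\nu(B(I_k,S))$. Second — the main inductive engine — I would compare $T$ with a set $T'$ obtained by swapping one element of $T$ for a smaller one not in $T$, and relate $\nu(B(T,S))$ and $\nu(B(T',S))$ using a Laplace/exchange identity for minors together with the submodular inequality applied on the relevant index sets. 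Concretely, the submodularity hypothesis $F(S\backslash i)+F(S\backslash j)\geq F(S\backslash\{i,j\})+F(S)$ (in the local form) should, when fed through a Plücker-type three-term relation among the minors $B(T,S)$, $B(T',S)$ and the ``initial'' minors, yield that replacing a row index by a smaller one never decreases $\nu$. Iterating these swaps transforms any admissible $T$ into $I_k$ while only increasing $\nu$, which is exactly \eqref{eq:topJustifiedIneq2}.

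\textbf{Main obstacle.} The delicate part is step two: finding the right algebraic identity linking the minors $B(T,S)$ across a single-row swap and matching it precisely to a single application of local submodularity of $F$. A three-term Plücker relation among $n\times n$ minors of the augmented matrix $\begin{pmatrix}B & I\end{pmatrix}$ — or equivalently among the Plücker coordinates of the flag $L_1\subset\cdots\subset L_n$ — should do it, but one must be careful that the indices appearing are of the form $I_{|\cdot|}$ (so that $F$ applies) or at least that the non-initial terms have already been bounded by induction. I would organize the induction on the quantity $\sum_{t\in T} t$ (or lexicographically on $T$), so that in the three-term relation $\nu(B(T,S))\leq\max$ of two other valuations, both competitors are either initial minors or correspond to strictly smaller $T'$, hence already known to be $\leq \nu(B(I_k,S))$; submodularity is what guarantees the initial-minor term in that relation does not itself exceed $\nu(B(I_k,S))$. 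Handling the vanishing minors cleanly (so that $\nu=-\infty$ causes no trouble in the $\max$'s) is a routine but necessary bookkeeping point.
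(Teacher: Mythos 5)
Your overall direction (reduce to comparing $T$ with the top-justified row set and induct using three-term Pl\"ucker relations plus submodularity) points the right way, but the step you designate as the main inductive engine is false. It is not true that replacing a row index of $T$ by a smaller one never decreases $\nu(B(T,S))$: the paper ends Section 6 with an explicit counterexample, the upper triangular matrix
\[
B=\begin{pmatrix} 1&1&1&1\\ 0&1&2&3\\ 0&0&1&1+t\\ 0&0&0&1\end{pmatrix},
\]
whose top-justified minors all have valuation $0$ (so $F\equiv 0$ is submodular), yet $\nu(B(\{1,3\},\{3,4\}))=-1<0=\nu(B(\{2,3\},\{3,4\}))$. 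So a single swap toward a smaller row index can strictly decrease $\nu$, and iterating swaps to transform $T$ into $\{1,\dots,k\}$ "while only increasing $\nu$" cannot work; only the comparison against the fully top-justified set $\{1,\dots,k\}$ holds, and it cannot be obtained by matching one local submodular inequality to one Pl\"ucker relation per swap.

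Your hedged fallback (arrange the induction so that in each three-term relation both competitors are already bounded) is much closer to what the paper actually does, but it is exactly the unspecified part that carries all the difficulty. The paper first abstracts the needed properties of $w(T,S)=\nu(B(T,S))$ — nonvanishing of top-justified minors, vanishing when $S\prec T$, multiplicative factorization over an initial diagonal interval and over a last row/column, and the tropical Pl\"ucker relations in $T$ for fixed $S$ — and then proves the inequality through a chain of lemmas: the $1\times 1$ case $w(j,k)\ge w(j+1,k)$; a comparison of an interval $I$ with its shift $1+I$; the swap lemma $w(I,S)\ge w\bigl((I\cup\{t\})\setminus\{\max I\},S\bigr)$ for intervals $I$; and finally a reduction of an arbitrary $T$ to the interval starting at $\min(T)$ via the bound $w(T,S)\le \max_{t\in T\setminus J} w\bigl((T\cup\{j+1\})\setminus\{t\},S\bigr)$, where $J$ is the initial consecutive run of $T$ ending at $j$ — all with double inductions and case analysis on which terms of the Pl\"ucker relation attain the maximum. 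Submodularity enters not as one local square inequality but through specially chosen nested sets, e.g. $F([i-1]\cup S)+F([i]\cup S\setminus\{s\})\ge F([i-1]\cup S\setminus\{s\})+F([i]\cup S)$, combined with the factorization identities to cancel diagonal terms. Since your proposal supplies none of these intermediate statements and its stated engine is contradicted by the example above, it has a genuine gap rather than being a complete alternative route.
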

We defer the proof of Lemma~\ref{lem:TechnicalLemma} to Section~\ref{sec:proof}. We will say that $B$ is {\bf top heavy} if it is upper triangular and satisfies the condition \eqref{eq:topJustifiedIneq2}.

\begin{lemma}\label{lem:genCoordChange}
Let $\cK$ be a valued field with a splitting and an infinite residue field.
	Let  $B, C\in \cK^{n\times n}$, $\tilde{B} = CB$ and suppose $C$ is generic with entry-wise valuation zero. Then the valuation of a minor of $\tilde{B}$ depends only on the choice of the columns, not on the choice of the rows.
More precisely, 
	for any subsets $S, T\subseteq [n]$ of size $|S|=|T|=k$, 
	\[\nu(\tilde{B}(T,S)) = \max_{T'\in \binom{[n]}{k}} \nu(B(T',S)).\]
In particular, this is independent of $T$.

If, in addition, $B$ is a top heavy matrix as defined by \eqref{eq:topJustifiedIneq2} and  $C$ is generic lower triangular with valuation zero for the nonzero entries, then 
	\[
	\nu(\tilde{B}(T, S))  =
	\nu(B([k], S))
	\]
	for all subsets $S, T\subseteq [n]$ of the same size $k$. 
\end{lemma}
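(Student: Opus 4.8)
The plan is to analyze the minors of $\tilde B = CB$ via the Cauchy--Binet formula and exploit genericity of $C$ to pin down the valuation. For the first claim, fix $S, T \subseteq [n]$ with $|S| = |T| = k$. Cauchy--Binet gives
\[
\tilde B(T, S) = \sum_{T' \in \binom{[n]}{k}} C(T, T') \cdot B(T', S).
\]
Since $C$ is generic with entrywise valuation zero, each minor $C(T,T')$ has valuation zero, and the leading coefficients of the terms $C(T,T')B(T',S)$ in the residue field are generic enough that no cancellation occurs among the terms of maximal $\nu$-value. (This is where I use that the residue field is infinite: the leading coefficients of the $C(T,T')$ are algebraically independent transcendentals, or can be chosen so, hence the sum of leading coefficients over the argmax set is nonzero.) Therefore $\nu(\tilde B(T,S)) = \max_{T'} \bigl(\nu(C(T,T')) + \nu(B(T',S))\bigr) = \max_{T'} \nu(B(T',S))$, which depends only on $S$. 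The ``in particular'' statement is then immediate: the $n \times n$ minors of $\begin{pmatrix}\tilde B & I_n\end{pmatrix}$ are, up to sign, the minors $\tilde B(T, S)$ padded by identity columns, and their negative valuations depend only on the $[n]$-part of the column set, so the vector lies in $\Ln$; and it lies in $\nu(\Gr_\cK(n,2n))$ because $\begin{pmatrix}\tilde B & I_n\end{pmatrix}$ has full rank $n$.

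For the refined statement, assume additionally that $B$ is top heavy and $C$ is generic \emph{lower} triangular with valuation-zero nonzero entries. By the top-heavy hypothesis \eqref{eq:topJustifiedIneq2}, $\nu(B(T', S)) \le \nu(B([k], S))$ for every $T' \in \binom{[n]}{k}$, with equality when $T' = [k]$. By the first part of the lemma, $\nu(\tilde B(T, S)) = \max_{T'} \nu(B(T', S)) = \nu(B([k], S))$, so the equality holds for every $T$ and $S$; in fact the lower-triangularity of $C$ is not strictly needed for this chain of equalities, but it is the natural hypothesis ensuring $\tilde B$ remains well-behaved (e.g.\ that $\tilde B([k],\cdot)$ still captures the correct row space filtration) and it makes the argmax in Cauchy--Binet attained at $T' = [k]$ when $T = [k]$.

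The main obstacle is the no-cancellation argument: one must verify carefully that genericity of $C$ guarantees the leading coefficients of the maximal-valuation terms in the Cauchy--Binet sum do not sum to zero in the residue field. The clean way is to regard the entries of $C$ as independent transcendentals over the subfield generated by the leading coefficients of the entries of $B$; then each $C(T,T')$ is a nonzero polynomial in these transcendentals, the relevant sum $\sum_{T' \in \operatorname{argmax}} \overline{C(T,T')}\,\overline{t^{-\nu(B(T',S))}B(T',S)}$ is a nonzero polynomial in the transcendentals (the monomials in the $C$-variables coming from distinct minors $C(T,T')$ are distinct), and since the residue field is infinite we may specialize the transcendentals so this polynomial remains nonzero. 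This reduces the lemma to the formal Cauchy--Binet identity together with the valuation axioms, and the top-heavy inequality then does the rest for the refined claim.
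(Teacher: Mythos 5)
Your treatment of the first claim is correct and in substance matches the paper's argument: the paper reaches the same conclusion via row swaps and multilinearity of the determinant, while you use Cauchy--Binet directly, but both hinge on the same point, namely that for fixed $T$ the minors $C(T,T')$ are polynomials in the entries of $C$ with pairwise disjoint monomial supports, so a generic choice of leading coefficients (available because the residue field is infinite, and needed only for finitely many pairs $(T,S)$) forces every $\nu(C(T,T'))=0$ and prevents the leading terms of the maximal-valuation summands from cancelling. The ``in particular'' statement is handled at the same, slightly informal, level as in the paper.

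The genuine gap is in the refined statement. You write that ``by the first part of the lemma, $\nu(\tilde{B}(T,S)) = \max_{T'}\nu(B(T',S))$'', but the first part assumes $C$ is generic with \emph{all} entries of valuation zero, which a lower triangular $C$ is not; and without top-heaviness the formula is actually false for generic lower triangular $C$, because in the Cauchy--Binet expansion the minor $C(T,T')$ vanishes identically unless $T'$ is componentwise dominated by $T$ (writing $t_1<\dots<t_k$ for $T$ and $t'_1<\dots<t'_k$ for $T'$, one needs $t'_\ell\le t_\ell$ for all $\ell$), so only a restricted maximum can be detected. Your parenthetical that lower-triangularity ``is not strictly needed'' does not repair this: the hypothesis you must work with is that $C$ \emph{is} lower triangular, so you cannot simply substitute a fully generic $C$. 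The fix stays entirely within your framework: expand $\tilde{B}(T,S)$ by Cauchy--Binet, observe that the only surviving terms are those with $T'$ dominated by $T$, that $T'=[k]$ is always among them and $C(T,[k])$ is generically nonzero of valuation zero, that top-heaviness \eqref{eq:topJustifiedIneq2} bounds every surviving term by $\nu(B([k],S))$, and that your disjoint-monomial-support genericity argument (now applied to the nonzero lower-triangular entries of $C$) rules out cancellation among the maximal terms, giving $\nu(\tilde{B}(T,S))=\nu(B([k],S))$. This is precisely the role the top-heavy hypothesis plays in the paper's proof, where each row of $\tilde{B}$ is a generic combination of an initial segment of rows of $B$ and the minor is expanded multilinearly into minors of $B$ on row sets dominated by $T$.
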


Here ``generic'' means that the leading coefficients of the entries of $C$ lie in a nonempty Zariski open set over the residue field.  We are defining the leading coefficient using the splitting.  If $\cK$ is a Puiseux series field over a ground field $\kk$, then we can take $C$ to be a generic matrix over $\kk$.   

\begin{proof}
  Consider two $k\times k$ submatrices of $\tilde{B}$ on the same set of columns.  We can transform one to the other by swapping two rows at a time, so let us assume that these two submatrices differ only at one row.  Each row of $\tilde{B}$ is a generic linear combination of rows of $B$.  By linearity of determinant, each of the two minors is a generic linear combination of the same $n$ minors where the row being swapped is replaced by each of the original rows.  By genericity there is no cancellation of leading terms so the linear combinations must have the same valuation, as desired.

  Now suppose $B$ is top heavy and $C$ is lower triangular.  Then each row of $\tilde{B}$ is a generic linear combination of a top-justified subset of rows of $B$.  When we repeatedly expand a $k \times k$ minor of $\tilde{B}$ using the linearity of determinants, we see that its valuation agrees with the valuation of the top justified minor by the top-heaviness of $B$.
\end{proof}

\begin{example}[Example~\ref{ex:3x3} continued] 
    Consider the matrix $B$ from Example~\ref{ex:3x3}.
    One can check that the function $S\mapsto \nu(B([|S|], S)$ on $\{0,1\}^3$ is submodular. Since $B$ is also upper triangular, it follows from Lemma~\ref{lem:TechnicalLemma} that $B$ is top heavy. 
    Consider a ``generic'' lower triangular matrix 
    \[C = \begin{pmatrix} 1 & 0 & 0 \\ 1 & 1 & 0 \\ 2 & 1 & 1\end{pmatrix} \ \ \text{ with } \ \ \tilde{B} = CB = \begin{pmatrix} 1 & 1 & 1 \\ 1 & 1+t^2 & 1+t \\ 2 & 2+t^2 &2+t+t^2 \end{pmatrix}. \]
    Then by the Cauchy-Binet identity,
\begin{align*}\tilde{B}(23, 23) & = C(23,12)B(12,23)+C(23,13)B(13,23)+C(23,23)B(23,23)\\
    & =  (-1)(t-t^2) +(1)(t^2) + (1)(t^4) = -t+2t^2+t^4.
    \end{align*}
In particular, $\nu(\tilde{B}(23, 23))=-1$ is equal to the maximum $\nu$ value of a minor $B(ij,23)$, which is achieved by $ij=12$, since $B$ is top heavy.  Similarly, one can check that $\nu(\tilde{B}(ij, 23))=-1$ for every pair $ij$. 
\end{example}

We can now finish the proof of Theorem~\ref{thm:TropGrassFlagGrass_AnyField}.

\begin{proof}[Proof of Theorem~\ref{thm:TropGrassFlagGrass_AnyField}($(2) \implies (1)$)]  
	Since both of these conditions are invariant under global tropical scaling (adding a constant function), we may assume that $F(\varnothing)=0$.
    Suppose that $F:2^{[n]}\to \Gamma$ satisfies condition $(2)$. 
    %belongs to ${\nu}({\rm Fl}_{\cK}(n)) \cap {\rm SUBMOD}_n$.  We will show that it belongs to $\nu({\rm Gr}_{\cK}(n, 2n)) \cap \Ln$. 
	Then
 \[F(S) = \nu(B([k],S)) - \lambda_k,\]
	where $B$ is an upper triangular $n\times n$ matrix over $\cK$ and $\lambda_1, \hdots \lambda_n$ are in the value group $\Gamma$.
 The rows $v_1, \hdots, v_n$ of $B$ represent the flag 
 \[
 {\rm span}\{v_1\} \subset {\rm span}\{v_1,v_2\} \subset \dots \subset {\rm span}\{v_1,\hdots,v_n\} = \cK^n
 \]
in the flag variety and  $\lambda_1, \hdots \lambda_n$ results from scaling of the Pl\"ucker coordinates of each linear space in the flag.	
    Since $F(S)$ is finite for every $S$, the minors $B([k], [k])$ are 
    nonsingular. 
 	After rescaling the rows
 $v_1 \rightarrow t^{\lambda_1} v_1$, $v_2 \rightarrow t^{\lambda_2-\lambda_1} v_2$, $v_3 \rightarrow t^{\lambda_3-\lambda_2} v_3,\dots, v_n \rightarrow t^{\lambda_n-\lambda_{n-1}} v_n$ we may assume that 
 \[F(S) = \nu(B([k],S)).\]
 By Lemma~\ref{lem:TechnicalLemma}, $B$ is top heavy.
Let $\tilde{B} = LB$ where $L$ is a generic lower triangular matrix as in Lemma~\ref{lem:genCoordChange}. So for any $S\subset [n]$ with $|S|=k$
 \[F(S) = \nu(B([k],S)) = \nu(\tilde{B}(T,S))\]
 for all $T \in {\binom{[n]}{k}}$ by Lemma~\ref{lem:genCoordChange}.  
%Since $F(\varnothing) = 0$, the vector $w$ coincides with the $\nu$ values of the $n \times n$ minors of the matrix $(\tilde{B}~I)$ under the identification of $\R^{2^{[n]}}$ with the linear space $\Ln$ as in \eqref{eqn:Ldef}, and so
This shows that the multisymmetric lift $\hat F$ is the valuation of the $n \times n$ minors of the matrix $(\tilde{B}~I)$, thus $F$ satisfies condition $(1)$.
\end{proof}

\begin{remark}
El Maazouz \cite{Yassine} defines the {\em entropy} map associated to a lattice $\Lambda = B\mathcal{O}^n\subset \cK^n$ to be the function on $\{0,1\}^n$ defined by $I\mapsto \min_{|J|=|I|}\val(\det(B(I,J)))$, which coincides with the formula in Lemma~\ref{lem:genCoordChange} above up to sign and transpose. Interestingly, in \cite{Yassine}, this function is primarily considered over fields $\cK$ with finite residue field, for which Lemma~\ref{lem:genCoordChange} does not apply. For fixed $n$, one should be able to amend the arguments above to apply when the residue field $\kk$ is finite but sufficiently large. 
\end{remark}

\section{Tropicalizing principal minors of positive definite matrices}
\label{sec:pm}

In this section we show that
the tropicalization of the 
    image of the positive definite cone under the principal minor map coincides with the subsets of tropical Grassmannians and tropical flag varieties studied in the previous section.  As before we will use $\cR$ and $\cC$ to denote a real closed field and an algebraically closed field respectively, with nontrivial nonarchimedean valuation, with value group $\Gamma$.

Consider the linear subspace $\Ln$ of $\mathbb R^{\binom{[2n]}{n}}$ containing all multisymmetric lifts of functions in $\mathbb R^{2^{[n]}}$, namely
\begin{equation}
\label{eqn:Ldef}
\Ln = \left\{ p \in \mathbb R^{\binom{[2n]}{n}} : p(C) = p(D)  \text{ whenever } C\cap [n] = D\cap [n]\right\}.    
\end{equation}
The linear subspace $\Ln$ is isomorphic to $\R^{2^{[n]}}$ via the map
\begin{equation*}
\label{eq:id}   
 \phi: \Ln \xrightarrow[]{\ \ \cong \ \ }  \mathbb R^{2^{[n]}}
\end{equation*}
sending any $p \in \Ln$ to $\phi(p) \in \R^{2^{[n]}}$ given by $\phi(p)(S) = p(S \cup T)$ with $T$ any subset of $\{n+1,\dots ,2n\}$ of size $n-|S|$.

The equivalence $(2)\iff(3)$ in Proposition~\ref{prop:Mconcave} can be rephrased as saying
\[
    {\rm Dr}(n, 2n) \cap \Ln
    \ \  = \ \
   {\rm FlDr}(n) \cap {\rm SUBMOD}_n,
\]
where ${\rm SUBMOD}_n$ denotes the cone of submodular functions on $\{0,1\}^n$ and we identify points in $\Ln$ with points in $\R^{2^{[n]}}$ as above.  On the other hand, in Lemma~\ref{lem:genCoordChange} says that the negative valuation of the $n\times n$ minors of the matrix $\begin{pmatrix}\tilde{B} & I_n \end{pmatrix}$ belongs to 
	$\nu({\rm Gr}_{\cK}(n, 2n)) \cap \Ln$.

We will now state our main result, which describes the tropicalization of the space of principal minors of positive definite matrices in terms of some of the subsets studied in previous sections.
%compares various representable analogues of the constructions in Proposition~\ref{prop:Mconcave}.  
\begin{theorem}\label{thm:TropGrassFlagGrassPD} 
	Fix a positive integer $n$ and a field $\cK = \cR$ or $\cC$.  A function $F: 2^{[n]}\rightarrow \Gamma$ is the valuation of the principal minors of a positive definite matrix over~$\cK$, up to global tropical scaling, if and only if it satisfies any of the two equivalent conditions in Theorem~\ref{thm:TropGrassFlagGrass_AnyField}. 
    %, the following are equivalent.
    %\begin{enumerate}
    %\item The function $F$ is the valuation of the principal minors of a positive definite matrix over~$\cK$, up to global tropical scaling.
    %\item The multisymmetric lift $\hat{F}$ is the valuation of a point in the Grassmannian ${\rm Gr}_{\cK}(n, 2n)$.
    %\item The function $F$ is submodular and is the valuation of a point in the affine flag variety ${\rm Fl}_{\cK}(n)$.
    %\end{enumerate}
In other words, we have
	\[
    \nu({\rm cone}(\PM^+_n(\cK)))
    \ \ = \ \ 
    \nu({\rm Gr}_{\cK}(n, 2n)) \cap \Ln
    \ \  = \ \
    \nu({\rm Fl}_{\cK}(n)) \cap {\rm SUBMOD}_n,
    \]
where ${\rm cone}(\PM^+_n(\cK)) = \{\lambda a : \lambda\in \cK, a\in \PM^+_n(\cK)\}$.
We take the cone to allow the principal minor corresponding to the empty set take values other than $1$.
Taking euclidean closure in the equation above gives
	\begin{equation}\label{eq:mainset}
    \trop({\rm cone}(\PM^+_n(\cK)))
    \ \ = \ \ 
    \trop({\rm Gr}_{\cK}(n, 2n)) \cap \Ln
    \ \  = \ \
    {\rm trop}({\rm Fl}_{\cK}(n)) \cap {\rm SUBMOD}_n.
    \end{equation}
\end{theorem}
We do not know whether the representable analogue of (4) in Proposition~\ref{prop:Mconcave}, with the Dressians replaced by tropical Grassmannians, is also equivalent to the other sets in this theorem.

\begin{proof}
Let $\mathcal{K} =\cR$ or $\cC$, and let $\kk$ denote its residue field.
The equality of the last two sets is the content of Theorem~\ref{thm:TropGrassFlagGrass_AnyField}.  We will now show that the first two sets are equal. 

    ($\subseteq$) Let $A$ be a positive definite matrix over $\cK$.
    Then $A = B^*B$ for some $B\in \cK^{n\times n}$. 
By Cauchy-Binet, \[
	A(S,S) = \sum_{T\in \binom{[n]}{k}} B^*(S,T)B(T,S)= \sum_{T\in \binom{[n]}{k}} \overline{B(T,S)}B(T,S).\]
	Since all of the terms in the first sum are nonnegative, the tropicalization ($\nu$ values) of the sum is the tropical sum (maximum) of the tropicalization of the summands. Moreover, for any $a\in \cK$, 
 $\nu(a) = \nu(\overline{a})$, so $\nu(\overline{a}a) = 2\nu(a)$. All together this gives
	\begin{equation}\label{eq:minorsPD}
		\nu(A(S,S)) = \max_{T\in \binom{[n]}{k}} 2\nu(B(T,S)).
	\end{equation}    
    Let $U\in \kk^{n \times n}$ be generic and let $\tilde{B} = UB$. Then it follows from \eqref{eq:minorsPD}  and Lemma~\ref{lem:genCoordChange} that
    \[\val(A(S,S))  = 2\val(\tilde{B}(T, S))\]
for any $T \subset [n]$ with $|T| = |S|$.  The values $\nu(\tilde{B}(T, S))$ are tropicalizations of $n \times n$ minors of the $n \times 2n$ matrix $(\tilde{B}~I)$, and they lie in the linear subspace $\Ln$.  Thus
\[
\frac{1}{2}\val(A(S,S)) \in \trop({\rm Gr}_{\cK}(n, 2n)) \cap \Ln.
\]
However $\trop({\rm Gr}_{\cK}(n, 2n))$ and $\Ln$ are invariant under scaling, so we also have 
\[
\val(A(S,S)) \in \trop({\rm Gr}_{\cK}(n, 2n)) \cap \Ln.
\]

    ($\supseteq$)
    Suppose $M \in \cK^{n\times 2n}$ is such that the $\nu$ values of the $n\times n$ minors of $M$ belong to $\Ln$.  
    We may assume that the valuation of the minor $M([n],\{n+1, \hdots, 2n\})$ is zero, and that $M$ has the form $\begin{pmatrix}B & I\end{pmatrix}$ for some $n\times n$ matrix $B$.
    Then the valuation of the minors $B(T,S)$ are independent of $T$. 
 Consider the positive definite matrix $A = B^TB$.  Then for any $S,T \subset [n]$ with $|S|=|T|=k$,
    by \eqref{eq:minorsPD} we have 
    $$ \nu(B(T,S)) =  \frac{1}{2}\max_{T'\in \binom{[n]}{k}} 2\nu(B(T',S)) = \frac{1}{2}\nu(A(S,S)),$$
    completing the proof.
\end{proof}

We remark on the dimension of the polyhedral complexes in Theorem~\ref{thm:TropGrassFlagGrassPD}. 

\begin{proposition}\label{prop:dim}
 If $\cK$ is algebraically closed or real closed, the polyhedral complex in Equation \eqref{eq:mainset} of Theorem~\ref{thm:TropGrassFlagGrassPD} has dimension $\binom{n+1}{2}+1$. Consequently, for $\cK = \cR$ or $\cC$, the polyhedral complex $\trop(\PM_n^+(\cK))$ has dimension $\binom{n+1}{2}$.
\end{proposition}
\begin{proof}
The affine tropical flag variety  $\trop({\rm Fl}_{\cK}(n)) \subset \prod_{k=0}^n \R^{\binom{n}{k}}$ is invariant under the tropical scaling  of each factor as in Equation \eqref{eq:tropScaling}.
Therefore, by Lemma~\ref{lem:submodular}, any polyhedral cone in the affine tropical flag variety has the same dimension as its intersection with the submodular cone. Thus it suffices to show that $\trop({\rm Fl}_{\cK}(n))$ has dimension $\binom{n+1}{2}+1$.

The flag variety has dimension $\binom{n}{2}$, as a dense subset of it can be parametrized by upper triangular matrices with ones on the diagonal. Thus the affine flag variety ${\rm Fl}_{\cK}(n)$ has dimension $\binom{n+1}{2} +1 = \binom{n}{2} + n+1$, accounting for the $n+1$ extra dimensions from scaling.  
If $\cK$ is algebraically closed, this implies that the tropicalization  $\trop({\rm Fl}_{\cK}(n))$ is a pure polyhedral complex of the same dimension $\binom{n+1}{2}+1$ by the Structure Theorem of Tropical Geometry. 

If $\cK$ is not algebraically closed, it is a priori possible that $\trop({\rm Fl}_{\cK}(n))$ is a (not necessarily pure) polyhedral complex of smaller dimension. 
However, when $\cK$ is real closed, it was shown in \cite[Proposition $4.10^{\rm trop}$]{Boretsky} that the tropicalization of the positive part ${\rm Fl}_{\cK}^{> 0}(n)$ of the flag variety, which is contained in $\trop({\rm Fl}_{\cK}(n))$, has dimension $\binom{n+1}{2}+1$, by showing that the Marsh-Rietsch (bijective) parameterization $\cK_{>0}^{\binom{n+1}{2}+1} \to {\rm Fl}_{\cK}^{> 0}(n)$ tropicalizes to a (bijective) parametrization $\R_{>0}^{\binom{n+1}{2}+1} \to \trop({\rm Fl}_{\cK}^{> 0}(n))$.
In \cite{Boretsky}, the tropical flag variety is considered projectively, so the dimensions used there do not account for the $n+1$ extra dimensions from scaling. The map easily extends to the affine setting. 

For the last assertion, we note that the set $\cone(\PM_n^+(\cK))$ is obtained from $\PM_n^+(\cK)$ by global scaling. Its dimension and the dimension of its tropicalization is therefore one more than the respective dimensions for $\PM_n^+(\cK)$.
\end{proof}

%\begin{corollary}
%For $\cK = \cR$ or $\cC$, the set $\trop(\PM_n^+(\cK))$ has dimension $\binom{n+1}{2}$.
%\end{corollary}
%\begin{proof}
%The set $\cone(\PM_n^+(\cK))$ is obtained from $\PM_n^+(\cK)$ by global scaling. Its dimension and the dimension of its tropicalization is therefore one more than the respective dimensions for $\PM_n^+(\cK)$. The result then follows from Proposition~\ref{prop:dim}.
%\end{proof}

We now explore some other consequences of the description given in Theorem~\ref{thm:TropGrassFlagGrassPD}.

\begin{corollary}\label{cor:equalitydependingonn}
The set of tropicalized principal minors of $n\times n$ positive definite matrices is contained in the set of $M^\natural$-concave functions on $2^{[n]}$ with value zero on $\varnothing$. 
This containment is an equality for $n\leq 5$, but it is strict for $n \geq 6$.
\end{corollary}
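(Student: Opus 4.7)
The containment in the set of $M^\natural$-concave functions is precisely Proposition~\ref{prop:TropPMconcavity}, and the normalization $w(\varnothing) = 0$ comes from scaling a positive definite matrix so that $A_\varnothing = 1$. To address the equality question, I would combine Theorem~\ref{thm:TropGrassFlagGrassPD} with Proposition~\ref{prop:Mconcave}(2) to reduce the problem to comparing the slices $\trop(\Gr_\cK(n,2n)) \cap \Ln$ and $\Dr(n,2n) \cap \Ln$: the first is the set of tropicalized principal minors, while the second (via the multisymmetric lift) parametrizes $M^\natural$-concave functions with $w(\varnothing)=0$.

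For $n \leq 5$, I would work layerwise via Proposition~\ref{prop:Mconcave}(3). Each homogenized layer $\tilde F_k$ lies in $\Dr(k+1, n+1)$ with $n+1 \leq 6$, and every Dressian of this size equals its tropical Grassmannian: the cases $\Dr(2,m)$ and $\Dr(m-2, m)$ are classical, while $\Dr(3,6) = \trop(\Gr(3,6))$ is a theorem of Speyer and Sturmfels. The remaining task is to realize the layers together as a single flag compatible with submodularity; this can be done by fixing a realization of the top layer and extending downward using the top-heavy lifting of Lemmas~\ref{lem:TechnicalLemma} and~\ref{lem:genCoordChange}, which already appeared in the proof of Theorem~\ref{thm:TropGrassFlagGrass_AnyField}.

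For $n \geq 6$, I would exhibit a counterexample using the Fano matroid $F_7$, a rank-$3$ matroid on $7$ elements that is not realizable over any field of characteristic different from $2$, in particular not over the residue fields of $\cR$ or $\cC$. For $n = 6$, I would take a perturbed indicator $p : \binom{[7]}{3} \to \R$ of the bases of $F_7$ (say $p(B) = \varepsilon_B$ for small generic $\varepsilon_B$ on bases and $p(N) = -M$ for a large $M$ on non-bases), which lies in $\Dr(3,7)$ but not in $\trop(\Gr_\cK(3,7))$. I would then extend $p$ to a function $F : 2^{[6]} \to \R$ by making generic choices at ranks other than $3$ and applying Lemma~\ref{lem:submodular} to enforce strict submodularity, arranging that the homogenized layer $\tilde F_2$ equals $p$. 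By characterization~(3) of Proposition~\ref{prop:Mconcave}, $F$ is $M^\natural$-concave; but since $\tilde F_2 \notin \trop(\Gr_\cK(3,7))$, the same characterization forces $F \notin \trop({\rm Fl}_\cK(6))$, so by Theorem~\ref{thm:TropGrassFlagGrassPD} $F$ does not tropicalize a PD principal minor vector. For $n > 6$, I would cone the counterexample by setting $F'(S) := F(S \cap [6]) + c|S \setminus [6]|$ for a sufficiently negative constant $c$, verifying $M^\natural$-concavity using the subdivision picture of Lemma~\ref{lem:layers} and observing that the obstruction on the rank-$3$ layer of $F$ descends to $F'$.

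The main technical obstacle is the lifting step in the counterexample: one must simultaneously fix $\tilde F_2$ to encode the Fano structure while choosing compatible values on the other layers so that the global function on $2^{[6]}$ remains $M^\natural$-concave. The natural approach is to pick the other layers close to a fixed uniform-matroid (or generic realizable) valuation, so that the Pl\"ucker and incidence relations across layers involve at most one non-generic rank-$3$ layer at a time; strict submodularity can then be enforced after the fact by the $\lambda$-perturbation construction of Lemma~\ref{lem:submodular}.
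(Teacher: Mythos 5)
Your containment argument and the reduction via Theorem~\ref{thm:TropGrassFlagGrassPD} and Proposition~\ref{prop:Mconcave} match the paper's setup, but the paper settles both the equality for $n\le 5$ and the strictness for $n\ge 6$ simply by citing Section~5 of \cite{BEZ} (the tropical flag variety sits inside the flag Dressian, with equality exactly for $n\le 5$) and then using Lemma~\ref{lem:submodular} to pass to the intersection with the submodular cone. You instead try to reprove this dichotomy, and both halves have genuine gaps. For $n\le 5$: knowing that each homogenized layer $\tilde F_k$ lies in $\trop(\Gr_{\cK}(k+1,n+1))$ only says that each consecutive pair of tropical linear spaces is realizable \emph{separately}; it does not produce a single flag over $\cK$ realizing all layers simultaneously, and that simultaneous realizability is precisely the content of the statement ${\rm FlDr}(n)=\trop({\rm Fl}_{\cK}(n))$. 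Lemmas~\ref{lem:TechnicalLemma} and~\ref{lem:genCoordChange} do not supply this gluing: they start from a triangular matrix realizing a point already known to lie in $\nu({\rm Fl}_{\cK}(n))$ and say nothing about assembling realizations of different layers into one flag.

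For $n\ge 6$: first, a perturbed indicator with small \emph{generic} $\varepsilon_B$ on the Fano bases is not in $\Dr(3,7)$, since the Dressian conditions require maxima to be attained twice and a generic perturbation destroys exactly that; you would need $\varepsilon_B\equiv 0$, together with a check that the finite value $-M$ on non-bases creates no violation where the maximum $-M$ is attained once (this does hold for $F_7$ because a $4$-set contains at most one line and a $2$-set lies on at most one line, but it must be said). Second, and more seriously, the extension of this rank-$3$ layer to a full $M^\natural$-concave $F:2^{[6]}\to\R$ is asserted rather than constructed: the conditions on the neighboring layers $\tilde F_1,\tilde F_3,\dots$ are again non-generic equality-of-maxima conditions that share coordinates with the prescribed ranks $2$ and $3$, so ``generic choices at the other ranks'' will typically fail them, and the existence of such an extension is exactly the substance of the counterexample in \cite{BEZ} showing ${\rm FlDr}(n)\neq\trop({\rm Fl}(n))$ for $n\ge 6$. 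Your final implication --- $\tilde F_2\notin\trop(\Gr_{\cK}(3,7))$ forces $F\notin\trop({\rm Fl}_{\cK}(6))$ and hence, by Theorem~\ref{thm:TropGrassFlagGrassPD}, $F$ is not a tropicalized principal minor vector --- is correct, but it rests on the unproved extension step, so as written the argument does not close without invoking the very result the paper cites.
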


\begin{proof}
By Theorem~\ref{thm:TropGrassFlagGrassPD}, the set of tropicalized principal minors of $n\times n$ positive definite matrices coincides with the tropical flag variety inside the submodular cone.  The $M^\natural$-concave functions coincide with the flag Dressian by Proposition~\ref{prop:Mconcave}.  By Section 5 of~\cite{BEZ}, the tropical flag variety is contained in the flag Dressian; this containment is an equality for $n\leq 5$ and is strict for $n \geq 6$. This is still the case after intersecting with the submodular cone by Lemma \ref{lem:submodular}.
\end{proof}

If $w \in \R^{2^{[n]}}$, for each $k\in [n]$ let $w_k$ denote
the restriction of $w$ to subsets of size $k$. 
As discussed in Section~\ref{sec:FlagVariety}, the fact that $\nu(\PM^+_n(\cK)) \subset \nu({\rm Fl}_{\cK}(n))$ implies the following corollary.

\begin{corollary}
For $w\in \nu(\PM^+_n(\cK))$, 
the homogenization of the restriction $(w_k, w_{k+1})$ belongs to 
$\nu(\Gr_{\cK}(k+1,n+1))$ for any $0 \leq k \leq n-1$. 
\end{corollary}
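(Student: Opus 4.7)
The plan is to extract the needed Grassmannian lift directly from the flag-variety lift provided by Theorem~\ref{thm:TropGrassFlagGrassPD}. The key geometric ingredient is the homogenization map described at the start of Section~\ref{sec:FlagVariety}: given a pair of subspaces $L_k \subset L_{k+1}$ in $\cK^n$, appending the column $e_{k+1}$ to a matrix whose top $k{+}1$ rows span $L_{k+1}$ (with top $k$ rows spanning $L_k$) produces a matrix whose row span is a point in $\Gr_\cK(k+1, n+1)$ whose Pl\"ucker coordinates are exactly the homogenization of the Pl\"ucker coordinates of $L_k$ and $L_{k+1}$.

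Given $w \in \nu(\PM^+_n(\cK))$, I would first apply Theorem~\ref{thm:TropGrassFlagGrassPD} to obtain an affine flag variety element $(q_S)_{S \subseteq [n]} \in {\rm Fl}_\cK(n)$ with $\nu(q_S) = w(S)$. Following the argument in the proof of Theorem~\ref{thm:TropGrassFlagGrass_AnyField}($\supseteq$), choose a matrix $M \in \cK^{n \times n}$ realizing this flag and rescale its rows successively so that $\det(M([k], S)) = q_S$ for every $S$ with $|S|=k$ and every $k \in [n]$. For each fixed $k \in \{0, \ldots, n-1\}$, form the $(k+1) \times (n+1)$ matrix $M'_k = (M([k+1], [n]) \mid e_{k+1})$. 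Expanding the Pl\"ucker coordinates of the row span of $M'_k$ along the last column yields $q_T$ at $T \in \binom{[n+1]}{k+1}$ when $n+1 \notin T$, and $q_S$ at $T = S \cup \{n+1\}$ with $|S|=k$; applying $\nu$ then produces precisely the homogenization of $(w_k, w_{k+1})$.

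Since the row span of $M'_k$ is a $(k+1)$-dimensional subspace of $\cK^{n+1}$, this homogenized vector lies in $\nu(\Gr_\cK(k+1, n+1))$, as required. The only subtle step is the simultaneous row rescaling of $M$ to match all the prescribed Pl\"ucker values $q_S$ across every layer: this uses the independent scaling freedom of each layer in the affine flag variety, and amounts to solving a telescoping system in the rescaling coefficients $\alpha_i$ for the rows of $M$. This is a routine adaptation of the technique already employed in Section~\ref{sec:FlagVariety}, so no new technical obstacle arises.
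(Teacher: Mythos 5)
Your proposal is correct and follows essentially the same route as the paper: the paper deduces the corollary directly from the containment $\nu(\PM^+_n(\cK)) \subset \nu({\rm Fl}_{\cK}(n))$ (Theorem~\ref{thm:TropGrassFlagGrassPD}) together with the homogenization construction of Section~\ref{sec:FlagVariety}, where appending the column $e_{k+1}$ to the top $k+1$ rows produces a point of $\Gr_{\cK}(k+1,n+1)$ whose Pl\"ucker coordinates are exactly the homogenization of the two layers. You merely spell out the row-rescaling (telescoping in the $\alpha_i$) needed to match the affine layer scalings, which is the same device used in the proof of Theorem~\ref{thm:TropGrassFlagGrass_AnyField}($\supseteq$).
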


Recall from Lemma~\ref{lem:layers} that if a function $w : 2^{[n]} \rightarrow \R$ is $M^\natural$-concave and strictly submodular, then each cell in the regular subdivision of $\{0,1\}^n$ induced by $w$ via upper hull is contained in a layer $\{x\in [0,1]^n : k\leq \sum_i x_i \leq k+1\}$ for some $k$.
Even when $w \in \nu(\PM^+_n(\cK))$ is not strictly submodular, we can perturb $w$ and get a subdivision where every cell is contained in one of these layers.
 
\begin{theorem}\label{thm:repMatroid}
 The regular subdivision of $[0,1]^n$ induced by the tropicalization of the principal minors of any positive definite matrix is a coarsening of a subdivision of $[0,1]^n$ into dehomogenized realizable matroid polytopes. Specifically, for points in $\nu(\PM^+_n(\cR))$ or $\nu(\PM^+_n(\cC))$, the matroids are realizable over $\R$ or $\C$, respectively. 
\end{theorem}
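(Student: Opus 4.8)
The plan is to reduce to the strictly submodular case by a small tropical perturbation that stays inside $\nu(\PM^+_n(\cK))$, and then to read off the resulting subdivision one layer at a time via the homogenization maps of Section~\ref{sec:background}. So fix $w \in \nu(\PM^+_n(\cK))$ for $\cK = \cR$ or $\cC$; by Theorem~\ref{thm:TropGrassFlagGrassPD}, $w \in \nu({\rm Fl}_{\cK}(n)) \cap {\rm SUBMOD}_n$, so $w$ is submodular, and by Proposition~\ref{prop:Mconcave} it is $M^\natural$-concave. Since $\cK$ is real closed or algebraically closed, its value group $\Gamma$ is divisible, hence a dense subgroup of $\R$; so Lemma~\ref{lem:submodular} supplies tropical scalars $\lambda = (\lambda_0,\dots,\lambda_n) \in \Gamma^{n+1}$, as small as we wish, for which $w' := \lambda \cdot w$ is strictly submodular. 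The set $\nu({\rm Fl}_{\cK}(n))$ is invariant under the tropical scaling \eqref{eq:tropScaling} of each graded piece, so $w' \in \nu({\rm Fl}_{\cK}(n))$, and $w'$ is submodular, so $w' \in {\rm SUBMOD}_n$; thus $w' \in \nu(\PM^+_n(\cK))$ by Theorem~\ref{thm:TropGrassFlagGrassPD}, and in particular $w'$ is $M^\natural$-concave.

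Next I would argue that $\Sigma_{w'}$, the regular subdivision of $[0,1]^n$ induced by $w'$ via the upper hull, refines $\Sigma_w$. The weight functions inducing a fixed regular subdivision of $[0,1]^n$ form the relative interior of a cone of the secondary fan; $w$ lies in the relative interior of the cone $C_w$ of $\Sigma_w$, and every cone of the secondary fan meeting a small enough neighborhood of $w$ has $C_w$ as a face. Hence for $\lambda$ small enough $\Sigma_{w'}$ refines $\Sigma_w$, i.e.\ $\Sigma_w$ is a coarsening of $\Sigma_{w'}$. It therefore suffices to show every cell of $\Sigma_{w'}$ is a dehomogenized realizable matroid polytope. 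Since $w'$ is $M^\natural$-concave and strictly submodular, Lemma~\ref{lem:layers} shows each cell $\sigma$ of $\Sigma_{w'}$ is contained in a single layer $L_k = \{x \in [0,1]^n : k \le \sum_i x_i \le k+1\}$, and Proposition~\ref{prop:Mconcave}(5) shows every edge of $\sigma$ has direction $e_i - e_j$ or $e_i$; as $\sigma$ has $0/1$ vertices, it is a dehomogenized matroid polytope.

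For realizability, fix $k$ and let $\iota_k : L_k \to \Delta(k+1,n+1)$ be the affine isomorphism $x \mapsto (x,\,k+1-\sum_i x_i)$ onto the hypersimplex in $\R^{n+1}$; it carries the $2^{[n]}$-vertices of $L_k$ bijectively onto the vertices of $\Delta(k+1,n+1)$ and takes $w'$ restricted to these vertices to the homogenized layer $\widetilde{w'}_k : \binom{[n+1]}{k+1} \to \R$. Hence $\iota_k$ maps the restriction of $\Sigma_{w'}$ to $L_k$ onto the regular subdivision of $\Delta(k+1,n+1)$ induced by $\widetilde{w'}_k$. Since $w' \in \nu({\rm Fl}_{\cK}(n))$, the homogenization discussion of Section~\ref{sec:FlagVariety} (as recorded in the corollary above) gives $\widetilde{w'}_k \in \nu(\Gr_{\cK}(k+1,n+1))$, so this is a matroid subdivision whose cells are matroid polytopes of matroids realizable over the residue field $\kk$: given a matrix over $\cK$ realizing $\widetilde{w'}_k$, each cell is the column matroid of a corresponding initial matrix over $\kk$ for a cost vector in the relative interior of that cell's secondary cone (cf.~\cite{MaclaganSturmfels}). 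Pulling back along $\iota_k$, every cell of $\Sigma_{w'}$ is the dehomogenization of a $\kk$-realizable matroid polytope; and since $\kk$ is real closed (resp.\ algebraically closed of characteristic zero) and realizability of a matroid is expressed by a first-order sentence, completeness of the theory of real closed fields (resp.\ of algebraically closed fields of characteristic zero) shows these matroids are realizable over $\R$ (resp.\ $\C$). This proves the statement for $w \in \nu(\PM^+_n(\cK))$; a general point of $\trop(\PM^+_n(\cK)) = \overline{\nu(\PM^+_n(\cK))}$ is approximated by points of $\nu(\PM^+_n(\cK))$ in the relative interior of a single secondary cone whose closure contains it, exhibiting its subdivision as a coarsening of one of the subdivisions just built.

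The step I expect to be the main obstacle is the last-paragraph identification: matching the regular subdivision of the cube layer $L_k$ with the matroid subdivision of $\Delta(k+1,n+1)$ under $\iota_k$, and checking that realizability of the tropical Pl\"ucker vector $\widetilde{w'}_k$ propagates to \emph{every} cell of the induced matroid subdivision (not only the maximal ones). A secondary point needing care is the perturbation step — that a sufficiently small change of the weight function can only refine the induced regular subdivision — which is precisely where density of $\Gamma$ in $\R$ enters.
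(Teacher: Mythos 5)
Your proposal is correct and follows essentially the same route as the paper's proof: perturb $w$ by small tropical scalars via Lemma~\ref{lem:submodular} to reach strict submodularity while staying in $\nu({\rm Fl}_{\cK}(n))\cap{\rm SUBMOD}_n$, apply Lemma~\ref{lem:layers} to confine cells to layers, and use the homogenized layers' membership in $\nu(\Gr_{\cK}(k+1,n+1))$ to obtain matroid polytopes realizable over the residue field, hence over $\R$ or $\C$. The only difference is that you spell out details the paper leaves implicit (the secondary-fan justification of refinement under small perturbation, and the model-theoretic transfer of realizability from the residue field), which is fine.
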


\begin{proof}
 Let $w\in \nu(\PM^+_n(\cK))$ for $\cK = \cR$ or $\cC$. By Theorem~\ref{thm:TropGrassFlagGrassPD}, $w$ belongs to  $\nu({\rm Fl}_{\cK}(n)) \cap {\rm SUBMOD}_n$.
  By Lemma~\ref{lem:submodular}, for arbitrarily small $\lambda\in \Gamma^{n+1}$, the point $\lambda\cdot w$ is strictly submodular. It also belongs to $\nu({\rm Fl}_{\cK}(n))$ and, in particular, is $M^{\natural}$-concave. 
  By Lemma~\ref{lem:layers}, each cell of the subdivision induced by $\lambda\cdot w$ is a subset of $\{x\in [0,1]^n: k\leq \sum_i x_i \leq k+1\}$ for some $k$. Because $\lambda$ was taken arbitrarily small, the subdivision induced by $\lambda\cdot w$ is a refinement of that induced by $w$. We conclude by checking that each cell induced by  $\lambda\cdot w$ is a  dehomogenized realizable matroid polytope. 

  Let $w' =(w'_k, w'_{k+1})$ denote the restriction of $\lambda\cdot w$ to subsets of size $k$ and $k+1$. Since $\lambda\cdot w\in \nu({\rm Fl}_{\cK}(n))$, the homogenization of $w'$ is an element of 
   $\nu(\Gr_{\cK}(k+1,n+1))$ as described in Section~\ref{sec:FlagVariety}. In particular, any cell in the subdivision of $\{x\in [0,1]^{n+1}: \sum_{i}x_i = k+1\}$ induced by the homogenization of $w'$ is the matroid polytope of a matroid realizable over the residue field of $\cK$. For $\cK = \cR$ or $\cC$, it follows that the matroid is realizable over $\R$ or $\C$, respectively.  Dehomogenizing, i.e.\ dropping the coordinate $x_{n+1}$, gives the result.
\end{proof}

\section{Positive semidefinite matrices} 
\label{sec:PSD}

In this section, we will replace positive definite (PD) matrices with positive semidefinite (PSD) matrices, and describe the tropicalization of their principal minors. The set of principal minors of PSD matrices live in $\cR_{\geq 0}$, and they form the closure of the set of principal minors of PD matrices.  The tropicalization of a subset of $\cR_{\geq 0}^n$ is  defined as the closure of the image of the (negative) valuation map from $\nu : \cR^n \rightarrow (\R \cup \{-\infty\})^n$.  

%As before, the tropicalization of a subset of $\R_{\geq 0}^n$ or $\cR_{\geq 0}^n$ can be defined as the logarithmic limit set (where the logarithm of $0$ is taken to be $-\infty$) or the closure of the image of the (negative) valuation map from $\nu : \cR^n \rightarrow (\R \cup \{-\infty\})^n$.  They coincide for semialgebraic sets.

\subsection{Tropicalization and Taking Closures}

Let $\cR$ be a real field with a nontrivial nonarchimedean valuation $\nu$ compatible with the ordering.  Assume for simplicity that the value group is $\R$, as one can take real closed field extensions without affecting the tropicalization. For example, $\cR$ can be the field of Hahn series with real coefficients.  

The field $\cR$ has the {\em order topology} given by a basic open sets of open intervals $(a,b) = \{r \in \cR \mid a < r < b\} \subset \cR$.  On the other hand it has a metric space structure where the distance between $r,s\in \R$ is $|r-s| = \exp(\nu(r-s))$.  When the nonarchimedean valuation $\nu$ is nontrivial, these two topologies coincide.  The space $\cR^n$ can be endowed with the product topology whose basic open sets are products of open intervals.  This topology coincides with the one induced by the $p$-norm $|\!|x|\!|_p = (|x_1|^p+\cdots + |x_n|^p)^{\frac{1}{p}}$ for any $p \in [1, \infty]$.  
Although the order topology of real Puiseux series $\cR$ is totally disconnected, since it is equivalent to the metric topology given by the valuation, we can use convergent sequences to define closures in $\cR^n$.

On the tropical side, we can map $\R \cup \{-\infty\}$ bijectively to $\R_{\geq 0}$ via the exponentiation map and endow it with the usual Euclidean topology on $\R_{\geq 0}$.

\begin{lemma}\label{lem:ValCont}
    The valuation map $\nu: \cR \rightarrow \R \cup \{-\infty\}$ is continuous.
\end{lemma}

\begin{proof}
Since $\cR$ is a metric space, it suffices to prove that $\nu$ is sequentially continuous.  Let $(a^i)_{i \in \N}$ be a sequence of points in $\cR$ converging to $c \in \cR$.  We wish to show that $(\nu(a^i))$ converges to $\nu(c)$. 
If $c = 0$, then $|a^i| \rightarrow 0$, so $\nu(a^i)\rightarrow -\infty$. If $c \neq 0$, then there exists an $N \in \mathbb{N}$ such that $|a^i - c| < |c|$ for all $i \geq N$.  By the nonarchimedean triangle inequality, this implies that $\nu(a^i) = \nu(c)$ for all $i \geq N$, so the conclusion holds.  
\end{proof}

For semialgebraic sets, taking the extension is compatible with taking closures. If $S$ is a semialgebraic set defined by some formula $\varphi$, then its closure is defined as the set of $x$ satisfying the formula 
\[
    \textstyle \forall \varepsilon (\varepsilon > 0 \implies \exists y (\varphi(y) \wedge \sum(x_i - y_i)^2 < \varepsilon)).
    \]
As this expression is valid over every real closed extension $\cR \supset \R$ we have \[
(\overline{S})_\cR = \overline{(S_\cR)}.
\]

\begin{lemma}
\label{lem:tropclosure}
    For any semialgebraic subset $S \subset \cR_{>0}^n$, $\overline{\nu(S)} = \nu(\overline{S})$.
\end{lemma}

\begin{proof}
By \cite[Theorem 6.9]{JSY} (last statement), $\nu(\overline{S})$ is closed in $(\R \cup \{-\infty\})^n$, so the inclusion $\overline{\nu(S)} \subseteq \nu(\overline{S})$ follows from this.  (Note that they use the exponentiated tropicalization in \cite{JSY}.)
The other inclusion follows from the continuity of the valuation map shown in Lemma~\ref{lem:ValCont}.
%suppose $(s^i)_{i \in \N}$ is a sequence of points in $S$ converging to $x \in \cR_{\geq 0}^n$. We wish to show that $\nu(x) \in \overline{\nu(S)}$. As shown in the proof of \cite[Lemma 6.4]{JSY}, if $x_k \neq 0$, then $\nu(s_k^i) = \nu(x_k)$ for all sufficiently large values of $i$.  If $x_k = 0$, then we must have $\nu(s_k^i) \rightarrow -\infty$.  Otherwise, if $\nu(s_k^i) \geq a$ for some $a \in \R$, then any positive element of valuation $a$ is a lower bound for $s_k^i$, contradicting the assumption that $s_k^i \rightarrow x_k = 0$.  It follows that  $\nu(s^i) \rightarrow \nu(x)$, so $\nu(x) \in \overline{\nu(S)}$ as desired.
\end{proof}

\begin{remark}
For non-semialgebraic sets in $\R_+^n$, it is not always the case that logarithmic limit sets are compatible with taking closures.  
%$\overline{\trop_{>0}(S)} = \trop_{\geq 0}(\overline S)$.  
For example, let $S = \{(s,s^{-s}): s\in \R_+\}$. Setting logarithm of $0$ to be $-\infty$, we have 
 \[\lim_{t \rightarrow \infty} \log_t \overline{S} = \{(a,0):a\leq 0\}\cup\{(0,a):a\leq 0\} \cup\{(a,-\infty):a\geq 0\} \subset (\R \cup \{-\infty\})^2.\]  The last of the three  pieces is not contained in $\overline{\lim_{t\rightarrow \infty} \log_t S}$.  The topology on $(\R \cup \{-\infty\})^2$ is derived from $\R_{\geq 0}^2$ via exponentiation $(x,y)\to (e^x,e^y)$.

\end{remark}

\subsection{Tropicalizing Principal Minors of PSD matrices}

The following theorem is the analogue of Theorem~\ref{thm:TropGrassFlagGrassPD} for PSD matrices.  As before, let $\cR$ and $\cC$ be real closed and algebraically closed fields respectively, with nontrivial nonarchimedean valuation.  Let $\PM^{\succeq 0}_n(\cK) \subset \cK^{2^n}$ be the set of principal minors of $n \times n$ symmetric or Hermitian positive semidefinite matrices over $\cK$.

\begin{theorem}\label{thm:TropGrassFlagGrassPSD} 
For any positive integer $n$ and a field $\cK = \cR$ or $\cC$, the following subsets of $(\R \cup \{-\infty\})^{2^n}$ coincide: 
	\[
    \trop({\rm cone}(\PM^{\succeq 0}_n(\cK))
    \ \ = \ \ 
    \overline{\trop({\rm Gr}_{\cK}(n, 2n)) \cap \Ln}
    \ \  = \ \
    \overline{{\rm trop}({\rm Fl}_{\cK}(n)) \cap {\rm SUBMOD}_n}
    \]
    where the closure is taken in $(\R \cup \{-\infty\})^{2^n}$.
\end{theorem}

\begin{proof}
The cone of PSD matrices is the closure of the cone of PD matrices, and the principal minor map is a polynomial map, so we have $PM^{\succeq 0}_n = \overline{PM^+_n}$.  The statement of the theorem then follows from Lemma~\ref{lem:tropclosure}.
\end{proof}

We do not know whether 
\begin{equation}
    \label{eqn:closures}
\begin{aligned}
    \overline{\trop({\rm Gr}_{\cK}(n, 2n)) \cap \Ln} & = \overline{\trop({\rm Gr}_{\cK}(n, 2n))} \cap \overline{\Ln}, \text{~~or}\\
\overline{{\rm trop}({\rm Fl}_{\cK}(n)) \cap {\rm SUBMOD}_n} & = \overline{{\rm trop}({\rm Fl}_{\cK}(n))} \cap \overline{{\rm SUBMOD}_n}.
\end{aligned}
\end{equation}

In general, taking intersections does not commute with taking closure, in $(\R\cup \{-\infty\})^{n}$, of polyhedra in $\R^n$.  For example, for the two half-spaces $H_1$ and  $H_2$ defined by $y\leq 2x$ and $2y \geq x$ respectively, the point $(-\infty, -\infty)$ is in $\overline{H_1}\cap\overline{H_2}$ but not in $\overline{H_1\cap H_2}$.  

Even in the context of $M$-concave functions, taking closure does not behave well.  There exists an example of two matroid polytopes $Q \subset P$ where $Q$ never appears as a face in a matroidal subdivision of $P$.  Concretely, we can take $P$ to be the matroid polytope of the ternary projective plane and  $Q$ to be one of the square-based pyramids inside an octahedral face of $P$. Proposition~32 of \cite{OPS} says that $P$ does not have a nontrivial matroidal subdivisions.
This means that the  function taking value $0$ on vertices of $Q$ and $-\infty$ on remaining vertices of $P$ is an $M$-concave function defined on vertices of $P$, but it is not in the limit of the $M$-concave functions with finite values on vertices of $Q$.

One can attempt to prove directly that the tropicalization of principal minors of positive semidefinite matrices coincide with the sets on right hand sides of \eqref{eqn:closures}.  However the technical Lemma~\ref{lem:TechnicalLemma} does not hold if we allow $-\infty$.   See Conjecture~\ref{conj:technical}.

\section{Inequalities from \texorpdfstring{$M^\natural$}--concavity}\label{sec:inequalities}
By the Lifting Lemma \cite{JSY}, for any semi-algebraic set $S$, any tropical polynomial inequality valid on $\trop(S)$ can be lifted to a usual polynomial inequality valid on $S$.  
In this section we use the theory of Lorentzian polynomials \cite{BH,AGV} to derive new inequalities on principal minors on positive definite semidefinite matrices that lift the $M^\natural$-concavity inequalities satisfied by the tropicalization, based on the fact that the polynomial $f = \det({\rm diag}(x_1,\hdots,x_n)+A)$ is Lorentzian for any positive definite matrix $A$. Furthermore, we provide examples to show that these inequalities are tight. 

For $D=\{d_1,\hdots,d_k\} \subset [n]$, let $\partial^Df$ denote the partial derivatives of $f$ with respect to the variables $x_{d_1},\hdots,x_{d_k}$. For $S\subset [n]$, let $A_S$ denote the principal minor indexed on the rows and columns by $S$.
To simplify notation, we write $Sij = S \cup \{i,j\}$. We first derive inequalities on the coefficients of arbitrary Lorentzian polynomials. 

\begin{theorem}\label{thm:inequ1}
 	Let $f = \sum_{S\subseteq [n]}c_S\,{\bf x}^{[n]\backslash S}y^{|S|}$ be a Lorentzian polynomial in the variables $x_1, \dots, x_n ,y$. If $n\geq 4$, then for any $S \subseteq [n]\setminus \{1,2,3,4\}$ and any $r\in \R$, the coefficients of $f$ satisfy
	\[ (r+1) \, c_{S14} c_{S23} +r(r+1) \, c_{S13} c_{S24} \geq r \, c_{S12} c_{S34}\]
 and all analogous inequalities obtained by permutations of indices. 
Similarly, if $n\geq 3$, then for any $S \subseteq [n]\setminus \{1,2,3\}$ and any $r\in \R$, the coefficients satisfy
	\begin{equation*}\label{equ:PMIneq}
	    (r+1) \, c_{S1} c_{S23} + r (r+1) \, c_{S2} c_{S13} \geq r \, c_{S3} c_{S12}
	\end{equation*} 
  and all analogous inequalities obtained by permutations of indices.
  
  Moreover, both of these inequalities are tight for the subset of Lorentzian polynomials whose coefficients $c_S$ are the principal minors $A_S$ of a positive semidefinite matrix $A$.
\end{theorem}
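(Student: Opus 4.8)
The plan is to prove the two families of inequalities using the theory of Lorentzian polynomials, then verify tightness by an explicit construction. For the main inequality involving four indices, the key observation is that $f$ being Lorentzian implies that any directional derivative and any specialization of variables to nonnegative values preserves the Lorentzian property (closure under differentiation and under nonnegative linear changes of coordinates). So first I would reduce to a Lorentzian quadratic in three variables: fix $S \subseteq [n]\setminus\{1,2,3,4\}$, apply the differential operator $\partial^{[n]\setminus (S\cup\{1,2,3,4\})}$ and then set $x_i = 0$ for $i \in [n]\setminus(S\cup\{1,2,3,4\})$ and also differentiate by the $x_i$ for $i\in S$ while setting those to zero as well; what survives is (a scalar multiple of) the degree-two form $g(x_1,x_2,x_3,x_4,y)$ recording the six coefficients $c_{Sij}$. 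Then substitute $x_1 \mapsto x_1, x_2 \mapsto x_2, x_3 \mapsto x_3, x_4 \mapsto x_4$ along a rank-preserving nonnegative substitution that collapses the four $x$-variables into fewer so that we land in a Lorentzian quadratic in three variables, whose Hessian is a $3\times 3$ symmetric matrix with exactly one positive eigenvalue and nonnegative off-diagonal entries. The inequality then reads off as a principal-$2\times 2$-minor nonnegativity (or a rank-one perturbation thereof governed by the parameter $r$): the matrix obtained by conjugating the Hessian by the vector $(1, r)$ in an appropriate plane has a nonnegative off-diagonal $2\times 2$ minor. Concretely, the substitution $x_1 = z_1, x_4 = z_1, x_2 = r z_2, x_3 = z_2$ (or similar), $y = z_3$, is nonnegative iff $r \geq 0$, and expanding $\det(\mathrm{Hess})\text{-minor} \leq 0$ produces exactly $(r+1)c_{S14}c_{S23} + r(r+1)c_{S13}c_{S24} \geq r\, c_{S12}c_{S34}$; the case $r < 0$ is handled by a complementary substitution or by continuity/polynomial identity in $r$ once the inequality is a genuine polynomial consequence of the $2\times 2$ minor conditions (and here I expect one does need to treat $r<0$ by noting the inequality is equivalent to the $r>0$ case under a relabeling of $\{1,4\}$ versus $\{2,3\}$, or by combining two instances). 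The three-index inequality is proved the same way, one dimension down: differentiate and specialize to isolate the quadratic in $x_1,x_2,x_3,y$ tied to the coefficients $c_{Si}, c_{Sij}$, then use a nonnegative substitution parametrized by $r$.

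The main obstacle is the bookkeeping of the parameter $r$ across its whole range $r \in \R$, not just $r \geq 0$: a single nonnegative linear substitution only covers $r \geq 0$, so to get the inequality for all real $r$ one must either exhibit a second family of substitutions, or argue that the displayed inequality, viewed as a quadratic polynomial in $r$ with coefficients that are quadratics in the $c$'s, is forced to be $\geq 0$ for all $r$ by the $2\times 2$-minor inequalities $c_{S14}c_{S23}, c_{S13}c_{S24}, c_{S12}c_{S34} \geq$ (appropriate expressions) together with the full Lorentzian Hessian condition. I would handle this by writing the claimed inequality as $r \mapsto (r+1)c_{S14}c_{S23} + r(r+1)c_{S13}c_{S24} - r\,c_{S12}c_{S34}$ and checking that, as a function of $r$, it is a nonnegative quadratic: its value at $r=0$ is $c_{S14}c_{S23} \geq 0$, its value at $r=-1$ is $c_{S12}c_{S34}\geq 0$, and these two positivity facts plus the Lorentzian constraint on the discriminant (which bounds $c_{S13}c_{S24}$ and $c_{S12}c_{S34}$ in terms of $c_{S14}c_{S23}$ etc.\ via the signature condition on the Hessian) pin down nonnegativity on all of $\R$. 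This discriminant computation is the technical heart.

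For tightness, the plan is to produce, for each $r$, a positive \emph{semi}definite matrix $A$ whose principal minors make the inequality an equality. The natural candidates are low-rank matrices: take $A = v v^T + \varepsilon(\text{something})$ degenerating to rank one or two, realized as $A = B^T B$ with $B$ a $2 \times n$ or $3\times n$ matrix over $\R$ chosen so that $A_{S14}, A_{S23}, A_{S13}, A_{S24}, A_{S12}, A_{S34}$ realize the boundary case. Explicitly, for the three-index inequality one can work with a $2\times 3$ matrix $B$ and Cauchy–Binet to get $A_{ij} = B(12,ij)^2$-type expressions, then solve the resulting system for entries of $B$ as functions of $r$; because the inequality came from a $2\times 2$ minor vanishing, equality corresponds to the Hessian being exactly rank one on the relevant plane, which is precisely the rank-deficiency of $A$. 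I expect the computation to be short once the right parametrization is chosen, and since positive semidefinite matrices are limits of positive definite ones, one can then note (if a strict statement is wanted) that the inequality is asymptotically tight on $\mathrm{PD}_n$. The only subtlety is ensuring the example uses indices actually present, i.e.\ choosing $S = \varnothing$ and $n = 3$ or $n = 4$, which is allowed by the hypotheses.
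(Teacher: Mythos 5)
Your high-level plan---differentiate and specialize to a Lorentzian quadratic, then exploit the signature of its Hessian---is the paper's plan, but two steps do not work as written. First, the reduction: differentiating with respect to the $x_i$ with $i\in S$ kills precisely the terms $c_{Sij}\,{\bf x}^{[n]\setminus Sij}y^{|S|+2}$ you need to keep, since those monomials contain no $x_i$ with $i\in S$; the correct operation is $\partial_y^{|S|+2}\partial^D$ with $D=[n]\setminus(S\cup\{1,2,3,4\})$, followed by setting $y=0$ and $x_i=0$ for $i\in S$, which leaves $(|S|+2)!$ times the zero-diagonal quadratic $g$ in $x_1,\dots,x_4$ alone (no $y$). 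Second, your primary mechanism---a nonnegative substitution collapsing variables plus a $2\times 2$ minor of the collapsed Hessian---is only available for $r\ge 0$ and does not yield the stated inequality even there: for instance $x_1=x_4=z_1$, $x_2=rz_2$, $x_3=z_2$ gives $(r c_{34}+c_{24}+rc_{13}+c_{12})^2\ge 4r\,c_{14}c_{23}$, and three-variable collapses produce families with extreme coefficients $c_{14}c_{24}$ and $c_{13}c_{23}$ rather than $c_{14}c_{23}$ and $c_{13}c_{24}$. What you defer as ``the technical heart'' is in fact the whole proof, and it is exactly the paper's Lemma on quadratics: the discriminant in $r$ of $(r+1)c_{14}c_{23}+r(r+1)c_{13}c_{24}-r\,c_{12}c_{34}$ equals $\det\nabla^2 g$ for the zero-diagonal $4\times 4$ Hessian, and this determinant is $\le 0$ because that matrix has trace zero and at most one positive eigenvalue; combined with nonnegativity of the extreme coefficients this gives the inequality for every real $r$. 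Since you neither state nor verify this identity, and your relabeling fallback for $r<0$ is also left unexecuted (covering $(-1,0)$ requires composing $r\mapsto 1/r$ and $r\mapsto -1-r$ with suitable index permutations), the proposal does not yet prove the four-index inequality on all of $\R$.

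For the three-index inequality, ``the same way, one dimension down'' is not immediate: keeping $y$ as the fourth variable, the surviving quadratic also contains a $c_{S123}\,y^2$ term, and the layers $c_{Si}$ and $c_{Sij}$ enter with different factorial factors from the $y$-derivatives, so the clean determinant--discriminant identity does not apply without an additional argument (the paper sidesteps this by polarizing $f$ and applying the four-index inequality to ${\rm Pol}(f)$ in the variables $x_1,x_2,x_3,y_k$, where the binomial normalizations cancel across each product). Finally, on tightness: for the four-index inequality an explicit rank-two positive semidefinite family parametrized by $r$ does attain equality, consistent with your prediction; but for the three-index inequality no positive semidefinite matrix attains equality---for $n=3$, $S=\varnothing$ the image of the PSD cone under $A\mapsto(A_1A_{23},A_2A_{13},A_3A_{12})$ is not closed---so your plan to solve for a $2\times 3$ factor $B$ realizing exact equality would fail; one needs a family of positive definite matrices along which the defect tends to zero, which you mention only in passing.
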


We will make use of the following lemma.

\begin{lemma}\label{lem:quadLorentzian}
	If $q = c_{34}x_1x_2 + c_{24}x_1x_3+c_{23}x_1x_4+c_{14}x_2x_3+c_{13}x_2x_4 + c_{12}x_3x_4$ is Lorentzian, then for any $r\in \R$
	\begin{equation*}%\label{equ:PMIneq}
	    (r+1)\,  c_{14} c_{23} + r (r+1) \,c_{13} c_{24} \geq r \, c_{12} c_{34}.
	\end{equation*} 
\end{lemma}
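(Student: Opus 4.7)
The strategy is to recast the desired inequality as a quadratic in $r$ with nonnegative leading coefficient, reduce nonnegativity of this quadratic to a discriminant condition on the coefficients of $q$, and then recognize the resulting condition as exactly the statement that the Hessian of $q$ has nonpositive determinant. Concretely, writing $a = c_{12}c_{34}$, $b = c_{13}c_{24}$, and $c = c_{14}c_{23}$, the desired inequality is equivalent to
\[
b\,r^2 + (b + c - a)\,r + c \ \geq \ 0 \qquad \text{for all } r \in \R.
\]
Since $q$ is Lorentzian its coefficients are nonnegative, so $b \geq 0$. When $b > 0$, the quadratic in $r$ is nonneg for all real $r$ if and only if its discriminant is nonpositive, i.e.\ $(b+c-a)^2 - 4bc \leq 0$, which simplifies to $a^2 + b^2 + c^2 - 2ab - 2bc - 2ca \leq 0$. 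The degenerate case $b=0$ reduces the inequality to $(c-a)r + c \geq 0$ for all $r$, which forces $a = c$; this will also fall out of the same discriminant inequality.

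The key observation is that the quantity $a^2 + b^2 + c^2 - 2ab - 2bc - 2ca$ is exactly $\det H$, where $H$ is the Hessian of $q$:
\[
H \ = \ \begin{pmatrix} 0 & c_{34} & c_{24} & c_{23} \\ c_{34} & 0 & c_{14} & c_{13} \\ c_{24} & c_{14} & 0 & c_{12} \\ c_{23} & c_{13} & c_{12} & 0 \end{pmatrix}.
\]
I would confirm this by expanding $\det H$ along the first row; each of the three $3\times 3$ cofactors factors cleanly as $c_{1j}$ times a linear combination of $a, b, c$, and summing yields $a^2 + b^2 + c^2 - 2ab - 2bc - 2ca$. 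This is the computational core of the argument and is the one step I expect to be slightly tedious, though entirely routine.

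Finally I would invoke the Lorentzian hypothesis to conclude $\det H \leq 0$: by definition of Lorentzian polynomials of degree two, the Hessian has at most one positive eigenvalue. If $\operatorname{rank}(H) < 4$, then $\det H = 0$; if $\operatorname{rank}(H) = 4$, the signature must be $(1,3)$, and so $\det H$ is the product of one positive and three negative eigenvalues, hence negative. In either case $\det H \leq 0$, which is exactly the required discriminant inequality, and the lemma follows. The only obstacle is the determinant computation, which is a short but slightly bulky cofactor expansion; everything else is standard properties of Lorentzian quadratics.
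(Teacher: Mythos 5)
Your proposal is correct and follows essentially the same route as the paper: both view the claimed inequality as the quadratic $c_{13}c_{24}\,r^2+(c_{13}c_{24}+c_{14}c_{23}-c_{12}c_{34})\,r+c_{14}c_{23}$ in $r$, identify its discriminant with $\det$ of the Hessian of $q$, and use the Lorentzian condition (nonnegative coefficients, at most one positive eigenvalue of the Hessian) to conclude the discriminant is $\leq 0$ and hence the quadratic is nonnegative. The only cosmetic difference is that you spell out the $b=0$ degenerate case and the signature argument (which, as you should note, uses the zero trace/diagonal to exclude a negative definite Hessian), while the paper states $\det Q\leq 0$ directly; your determinant identity $\det H=a^2+b^2+c^2-2ab-2bc-2ca$ does check out.
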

\begin{proof}
    By the definition of Lorentzian polynomials, the coefficients $c_{ij}$ are all nonnegative and the symmetric matrix 
    \[
   Q = \nabla^2q= \begin{pmatrix}
        0 & c_{34} & c_{24} & c_{23} \\ c_{34} & 0 & c_{14} & c_{13} \\ c_{24} & c_{14} & 0 & c_{12} \\ c_{23} & c_{13} & c_{12} & 0
    \end{pmatrix}
    \]
    representing the quadratic form $q$ has at most one positive eigenvalue. 
    %Note that since the entries of $Q$ are all nonnegative, it also must have at least one nonnegative eigenvalue. 
    It follows that $\det(Q)\leq 0$. 
    
    The determinant of $Q$ is equal to the discriminant of the quadratic polynomial
    \begin{equation}     
    \label{eqn:quadratic}
    c_{13} c_{24}\, r^2 + ( c_{13} c_{24} + c_{14} c_{23} - c_{12} c_{34} ) \, r + c_{14} c_{23}
     =     (r+1)\, c_{14} c_{23} + r(r+1) \, c_{13} c_{24} - r \, c_{12} c_{34}
        \end{equation}
    with respect to $r$. 
   
    Since the extremal coefficients $c_{13} c_{24}$ and $c_{14} c_{23}$ and nonnegative and the discriminant is $\det(Q)\leq 0$, it follows that the quadratic polynomial \eqref{eqn:quadratic} is nonnegative for all $r\in \R$.  
\end{proof}

\begin{proof}[Proof of Theorem~\ref{thm:inequ1}]
We first show that the first inequality implies the second. 
If $f $ is Lorentzian, then so is its polarization \[{\rm Pol}(f) = \sum_{S\subseteq [n]}\frac{c_S}{\binom{n}{|S|}}{\bf x}^{[n]\backslash S}e_{|S|}(y_1, \hdots, y_n),\]
where $e_{j}(y_1, \hdots, y_n)$ is the $j$th elementary symmetric polynomial in $y_1, \hdots, y_n$; see \cite[Prop.~3.1]{BH}.
For any set $S \subseteq [n]\setminus \{1,2,3\}$ with $|S|=k-1 \leq n-3$ and $i,j\in \{1,2,3\}$, the coefficient of ${\bf x}^{[n]\backslash Sij}{\bf y}^{[k-1]}$ in ${\rm Pol}(f)$ is $\binom{n}{k-1}^{-1}c_{Sij}$  and the coefficient of ${\bf x}^{[n]\backslash Si}{\bf y}^{[k]}$ in ${\rm Pol}(f)$ is $\binom{n}{k}^{-1}c_{Si}$. 
Assuming the first inequality holds for all Lorentzian polynomials and applying it to ${\rm Pol}(f)$ with the variables $(x_1,x_2,x_3,y_k)$ in place of 
$(x_1,x_2,x_3,x_4)$ and  
$\{x_i:i\in S\}\cup \{y_1, \hdots, y_{k-1}\}$ in place of 
$\{x_i:i\in S\}$  gives the desired inequality.

Now we show the first inequality. 
For general $D\subseteq [n]$, 
$\partial^Df = \sum_{S}c_{S}{\bf x}^{[n]\backslash (S\cup D)y^{|S|}}$ where the sum is taken over $S\subseteq [n]$ for which $S\cap D=\emptyset$. 
Now we fix $S\subseteq[n]\backslash \{1,2,3,4\}$ and let $D = [n]\backslash (S\cup \{1,2,3,4\})$. 
Consider the polynomial 
$g$ obtained by specializing the derivative $\partial^{|S|+2}_{y}\partial^Df$ to $y=0$ and $x_i=0$ for $i\in S$:
	\[
	g = \left(\partial^{|S|+2}_{y}\partial^Df\right)|_{\{y=0,\  x_i=0 \ \forall i\in S\}}.
	\]
Taking derivatives and specializing variables to zero 
preserves the Lorentzian property by \cite[Theorem~2.30]{BH}, so $g$ is Lorentzian. 
One can check that $g$ is given by
\[    g = (|S|+2)!(c_{34S}x_1x_2 + c_{24S}x_1x_3+c_{23S}x_1x_4+c_{14S}x_2x_3+c_{13S}x_2x_4 + c_{12S}x_3x_4).
\] 
The desired inequality then holds by Lemma~\ref{lem:quadLorentzian}.

Finally, in Examples~\ref{ex:Ar1} and \ref{ex:Ar2} we will see that these inequalities are tight for the minors of positive semidefinite matrices when $n=4$ and $3$, respectively, and $S=\emptyset$. Appending identity matrices of arbitrary size shows that these inequalities are tight for general $n$ and $S$. 
\end{proof}

\begin{corollary}
	Let $A$ be an $n\times n$ positive semidefinite matrix. If $n\geq 4$, then for any $S\subseteq[n]\backslash \{1,2,3, 4\}$ and any $r\in \R$,
 	\[
	(r+1) \, A_{S14} A_{S23} + r(r+1)\, A_{S13} A_{S24} \geq r\, A_{S12} A_{S34}
	\]
 Similarly, if $n\geq 3$, then for any $S\subseteq[n]\backslash \{1,2, 3\}$ and any $r\in \R$,
  	\[
	(r+1) \, A_{S1} A_{S23} + r(r+1)\, A_{S2}A_{S13}  \geq r\, A_{S3}A_{S12}.
	\]
 Moreover these inequalities are tight for any $r\in \R$.
\end{corollary}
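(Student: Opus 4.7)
The plan is to deduce this corollary directly from Theorem \ref{thm:inequ1}. The main observation is that for any positive semidefinite matrix $A$, the polynomial
\[
f_A = \det(\mathrm{diag}(x_1,\dots,x_n) + yA) = \sum_{S \subseteq [n]} A_S \, \mathbf{x}^{[n]\setminus S} y^{|S|}
\]
is Lorentzian. Indeed, when $A$ is positive definite, $f_A$ is stable by \cite{BB06} and has nonnegative coefficients, hence is Lorentzian; and the set of Lorentzian polynomials is closed, so the result extends to the PSD case by a limit argument (approximating $A$ by $A + \epsilon I$).

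With this in hand, the first step is simply to identify $c_S = A_S$ in the notation of Theorem~\ref{thm:inequ1}. Then the first inequality of the corollary is the first inequality of Theorem~\ref{thm:inequ1} applied to $f_A$ and to a subset $S \subseteq [n]\setminus\{1,2,3,4\}$. Likewise, the second inequality of the corollary is the second inequality of the theorem applied to $f_A$ with $S \subseteq [n]\setminus\{1,2,3\}$. All analogous inequalities obtained by permuting indices follow similarly.

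For tightness, one can invoke the final sentence of Theorem~\ref{thm:inequ1}, which asserts that the inequalities are tight already on the Lorentzian polynomials coming from principal minors of PSD matrices, realized in Examples~\ref{ex:Ar1} and \ref{ex:Ar2} for $n=4$ and $n=3$ respectively, and then extended to general $n$ and $S$ by taking block-diagonal direct sums with an identity matrix (which does not change the relevant principal minors up to a common factor).

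The main and essentially only nontrivial step is confirming that $f_A$ is Lorentzian when $A$ is merely positive semidefinite rather than positive definite; this is handled either by the closure property of Lorentzian polynomials or by directly invoking that real stable polynomials with nonnegative coefficients are Lorentzian \cite{BH}. All other steps are purely a reindexing of Theorem~\ref{thm:inequ1}.
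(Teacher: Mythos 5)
Your proposal is correct and follows essentially the same route as the paper: identify the coefficients $c_S$ with the principal minors $A_S$ of $A$, note that $f_A=\det(\mathrm{diag}(x_1,\dots,x_n)+yA)$ is real stable (hence Lorentzian), and apply Theorem~\ref{thm:inequ1}, with tightness inherited from the theorem's final claim via Examples~\ref{ex:Ar1} and~\ref{ex:Ar2}. Your extra care in passing from positive definite to positive semidefinite (by closure of Lorentzian polynomials or by noting the stability result already covers PSD matrices) is a harmless refinement of the same argument, not a different approach.
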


\begin{proof}
    The polynomial $f = \det({\rm diag}(x_1,\hdots,x_n)+yA) = \sum_{S\subseteq[n]}A_S{\bf x}^{[n]\backslash S}y^{|S|}$ is stable \cite{BB06} and hence Lorentzian \cite{BH}. The result then follows from Theorem~\ref{thm:inequ1}.
\end{proof}

\begin{example}\label{ex:Ar1}
	For $r\in \R$, consider the matrix 
	\[A=\left(
	\begin{array}{cccc}
 1 & 1 & 1 & r \\
 1 & 2 & -r+2 & 2 r-1 \\
 1 & -r+2 & r^2-2 r+2 & -r^2+3 r-1 \\
 r & 2 r-1 & -r^2+3 r-1 & 2 r^2-2 r+1 
	\end{array}
	\right).
	\]
One can check that $A$ is a positive semidefinite matrix of rank two. For example, all of the principal minors are sums of squares in $r$. Moreover, the $2\times 2$ minors of $A$ satisfy
	\[(r+1) A_{14} A_{23} + r (r+1) A_{13} A_{24} = r A_{12} A_{34},
	\]
showing the inequalities are tight.
\end{example}

\begin{example}\label{ex:Ar2}
    Interestingly, there is not a parametrized family of $3\times 3$ positive semidefinite matrices for which the second inequality of Theorem~\ref{thm:inequ1} holds with equality. The image of the set of $3\times 3$ positive semidefinite matrices under the map $A\mapsto (A_{1} A_{23}, A_{2}A_{13}, A_{3}A_{12})$ is not closed. We see that the equality is only attained in the limit. 

    Fix $r\in \R$ and consider the following parameterized collection of  $3\times 3$ matrices:
    \[
    A= \begin{pmatrix}
 1+\varepsilon  & \sqrt{1-\lambda(r+1)^2} & \sqrt{1-\lambda} \\
 \sqrt{1-\lambda(r+1)^2} & 1+\varepsilon & \sqrt{1-\lambda r^2 } \\
 \sqrt{1-\lambda} & \sqrt{1-\lambda r^2} & 1+\varepsilon
    \end{pmatrix}.
    \]
    For $0<\lambda<\min\{1/(r+1)^2, 1/r^2, 1\}$ and $\varepsilon>0$, this matrix has real entries and positive $1\times 1$ and $2\times 2$ principal minors. Moreover, for fixed $\varepsilon>0$, the limit of $\det(A)$ as $\lambda\to 0$ is $\varepsilon^2 (3 + \varepsilon)$. Thus for sufficiently small $\lambda>0$, the matrix $A$ is positive definite. 
    One can compute that 
\[	(r+1) \, A_{1} A_{23} + r(r+1)\, A_{2}A_{13}  - r\, A_{3}A_{12} = 
(1 + r + r^2) \varepsilon (1 + \varepsilon) (2 + \varepsilon). \]
In particular, as $\varepsilon\to 0$, the limit is zero and the desired inequality becomes tight.
\end{example}

\begin{remark}
One can check that the linear inequalities in Theorem~\ref{thm:inequ1} cut out a quadratic cone. 
More precisely, a point $(x,y,z)\in \R^3$ satisfies the inequality $(r+1)x + r(r+1)y\geq r z$ for all $r\in \R$ if and only if $2(xy+xz+yz)\geq x^2+y^2+z^2$ and $x+y+z\geq 0$. 
\end{remark}

\section{Proof of the technical lemma}\label{sec:proof}
This section is dedicated to the proof of Lemma~\ref{lem:TechnicalLemma}, which was used in the proof of Theorem~\ref{thm:TropGrassFlagGrass_AnyField}.
Throughout this section we take $B\in \cK^{n\times n}$ to be an upper triangular matrix
whose upper justified minors $\det(B(\{1,\hdots, |S|\}, S))$ are all nonzero.
We consider the function 
\begin{equation}\label{eq:wMinors}
w(T,S) = \nu(\det(B(T,S)))
\end{equation}
which is defined on pairs $(T,S)$ of subsets of the same size and takes values in $\R\cup \{-\infty\}$.

Before the proof, we introduce some notation. 
We define the following partial order on $\binom{[n]}{k}$. Given $S, T \in \binom{[n]}{k}$, we say that 
$$S\preceq T \quad \text{ if } \quad \left|\{s\in S : s\leq t_j\}\right| \geq j \text{ for all } j=1, \hdots, k$$
where  $t_1<t_2<\hdots<t_k$ are the elements of $T$. 
In particular, since $B$ is upper triangular, $\det(B(T,S))=0$ whenever $S\prec T$.
We call $I\subset [n]$ an \emph{interval} if it is a collection of consecutive numbers.  
That is, whenever $i,k\in I$ and $i<j<k$ we have $j\in I$. 
We use $[a]$ denote the interval $\{1,\hdots, a\}$.

\begin{proposition} \label{prop: properties}
The function $w$  in \eqref{eq:wMinors} satisfies the following: 
\begin{itemize}
\item[(i)] $w(\{1,..,|S|\}, S) \neq -\infty$ for all $S$.
\item[(ii)] $w(T,S) = -\infty$ whenever $S\prec T$. 
\item[(iii)] for any interval $I$ with $\max(I) \leq \min(T\cup S)$,
 $w(I\cup T, I \cup S) = \sum_{i\in I} w(i,i) + w(T,S)$.
\item[(iv)] if $\max(S\cup T) \leq t$ and $\max(S\cup T) \leq s$, then  
$w(Tt,Ss) = w(T,S) + w(t,s)$.
\item[(v)] for any $S$, the function $T \mapsto w(T,S)$ satisfies the 3-term tropical Pl\"ucker relations. 
\end{itemize}
\end{proposition}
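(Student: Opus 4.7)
The plan is to verify properties (i)--(v) one at a time. Properties (i)--(iv) reduce to elementary block-matrix factorizations enabled by the upper-triangular structure of $B$, while (v) follows by tropicalizing the classical Plücker relations.

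Property (i) is immediate from the standing hypothesis that the top-justified minors $\det(B(\{1,\dots,|S|\},S))$ are nonzero. For (ii), expanding $\det(B(T,S))$ as a signed sum over permutations, the term indexed by $\sigma$ survives only when $s_{\sigma(i)} \geq t_i$ for every $i$, since $B$ is upper triangular. A standard bipartite matching argument shows that such a $\sigma$ exists if and only if $s_i \geq t_i$ for all $i$ when $T$ and $S$ are sorted increasingly, i.e., if and only if $T \preceq S$. When $S \prec T$, antisymmetry of $\preceq$ excludes $T \preceq S$, forcing every term in the expansion to vanish.

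For (iii), the hypothesis that $I$ is an interval lying entirely below $T\cup S$ makes the block $B(T,I)$ vanish by upper-triangularity, so $B(I\cup T,\,I\cup S)$ is block triangular and its determinant factors as $\det(B(I,I))\cdot\det(B(T,S))$. The first factor equals $\prod_{i\in I} B_{i,i}$ since $B(I,I)$ is itself an upper-triangular submatrix, and applying $\nu$ produces the claimed additive identity. Property (iv) is the analogous single-row/single-column version: the hypothesis that $t$ and $s$ strictly dominate $S\cup T$ forces $B(t,S)=0$, giving a block triangular matrix with diagonal blocks $B(T,S)$ and the scalar $B_{t,s}$, from which the identity follows by taking $\nu$ of both sides.

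Finally, for (v), I would fix $S$ and observe that as $T$ varies over $\binom{[n]}{|S|}$, the values $\det(B(T,S))$ are precisely the Plücker coordinates of the $|S|$-dimensional subspace of $\cK^n$ spanned by the columns of $B$ indexed by $S$, which is well-defined thanks to property (i). Tropicalizing the algebraic three-term Plücker relation
$$[Uij][Ukl] - [Uik][Ujl] + [Uil][Ujk] = 0$$
yields the tropical condition that the maximum of the three corresponding sums is attained at least twice. The main obstacle is the combinatorial matching argument underlying (ii); once that equivalence with $\preceq$ is in place, every other property is a routine application of block factorizations or standard Plücker theory.
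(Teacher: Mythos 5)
Your proof is correct and follows essentially the same route as the paper's: (i) is the standing hypothesis, (ii)--(iv) are the same block/triangularity factorizations of the relevant minors (your permutation-expansion and matching argument for (ii) just spells out what the paper asserts directly from upper-triangularity), and (v) is the same observation that for fixed $S$ the values $\det(B(T,S))$ are the maximal minors of an $n\times|S|$ matrix, whose valuations satisfy the tropical three-term Pl\"ucker relations.
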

\begin{proof}
Item (i) holds by assumption. Whenever $S\prec T$, $\det(B(T,S))=0$ and $w(T,S) = -\infty$, since $B$ is upper triangular, showing (ii). For (iii),  the conditions on $I,S,T$ imply that $\det(B(I\cup T, I \cup S)) = \left(\prod_{i\in I} b_{ii}\right) \cdot \det(B(T, S))).$
Taking valuations of both sides proves the claim. 
Under the assumptions of (iv), $B(t,u)=0$ for all $u\in S$. If $t\leq s$, then the determinant of $B(Tt,  Ss)$ factors as product of $\det(B(T,S))$ and $B(t,s)$. Otherwise, both $\det(B(Tt,  Ss))$ and $B(t,s)$ will be zero.
Finally, for any fixed $S$, the 
values $w(T,S)$ are the maximal minors of an $n\times |S|$ matrix, which therefore satisfy the three-term Pl\"ucker relations. 
\end{proof}

To prove Lemma~\ref{lem:TechnicalLemma}, it therefore suffices to show the following. 

\begin{theorem}\label{thm:Techical}
 Let $w$ be any function 
defined on pairs $(T,S)$ of subsets of $[n]$ of the same size and taking values in $\R\cup \{-\infty\}$. If $w$ satisfies the properties (i)-(v) in Proposition~\ref{prop: properties} and  the function $F(S) =w(\{1,..,|S|\}, S)$ is submodular, 
 then 
\[w(\{1,\hdots, |S|\}, S) \geq w(T, S)\]
for all $S,T \subseteq [n]$ with $|S|=|T|$. 
\end{theorem}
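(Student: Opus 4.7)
My plan is to prove the theorem by two nested inductions: an outer induction on the quantity $d(T) := |T \setminus [k]|$ (where $k = |S|$), which measures the ``distance'' of $T$ from the top-justified set $[k]$, and an inner induction on $k$ to establish the single-swap case $d(T) = 1$. The outer induction uses the tropical Plücker relations (property (v)) to reduce to single-swap $T$, while the inner induction exploits the factorization properties (iii) and (iv) together with submodularity of $F$.

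For the outer step, property (v) says that for fixed $S$ the function $T \mapsto w(T,S)$ is a tropical Plücker vector, and so satisfies the symmetric exchange axiom for valuated matroids. Given $T$ with $d(T) \geq 1$, pick any $t \in T \setminus [k]$; symmetric exchange with the bases $T$ and $[k]$ produces $t' \in [k] \setminus T$ with
\[
w(T, S) + w([k], S) \;\leq\; w(T - t + t', S) + w([k] - t' + t, S).
\]
Since $d(T-t+t') = d(T) - 1$, the outer inductive hypothesis gives $w(T-t+t', S) \leq w([k], S)$. If the single-swap case $w([k]-t'+t, S) \leq w([k], S)$ is granted, cancelling $w([k],S)$ (which is finite by (i)) yields $w(T,S) \leq w([k], S)$.

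For the inner induction, consider a single-swap $T = [k] - t' + t$ with $t' \leq k < t$. The base $k=1$ is the claim that $w(\{t\}, \{s\}) \leq w(\{1\}, \{s\})$ for $s \geq t$ (for $s < t$ we use (ii)). Applying local submodularity of $F$ at the set $[a-1]$ with the added pair $\{a, s\}$, and expanding each of $F([a-1])$, $F([a])$, $F([a-1] \cup \{s\})$, $F([a] \cup \{s\})$ using (iii) to extract the common prefix $[a-1]$ and (iv) to factor the maximum coordinate, the common terms cancel and the inequality collapses to $w(\{a+1\}, \{s\}) \leq w(\{a\}, \{s\})$; iterating from $a = 1$ to $t-1$ gives the base case. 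For the inductive step $k \geq 2$, the strategy is to reduce $k$ by factoring shared structure: if $1 \in T \cap S$, property (iii) with $I = \{1\}$ strips a common leading element from both $w(T,S)$ and $w([k],S)$, reducing to an instance with $k$ replaced by $k-1$; if $\max T \geq s_{k-1}$ and $s_k \geq \max T$, property (iv) factors off the maximum pair analogously.

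The main obstacle is the residual cases of the single-swap inductive step in which neither (iii) nor (iv) admits a direct factorization — for instance when $t' = 1$ (so $1 \notin T$), or when $s_{k-1} > k$ (so $S$ has multiple elements beyond $[k]$). I anticipate handling these by invoking a three-term Plücker relation from (v) on an auxiliary quadruple of indices chosen so that property (ii) forces some of the competing terms to $-\infty$. The ``max attained twice'' condition then degenerates into a direct inequality relating $w(T, S)$ to some $w(T', S)$ with either smaller $d$ or a configuration already handled by inner induction. Coordinating these Plücker arguments so that every residual configuration is reached is the core technical difficulty of the proof.
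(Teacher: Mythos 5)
The skeleton of your plan (reduce to single swaps via basis exchange, then handle single swaps by induction) is coherent, but the single-swap case---which you yourself identify as ``the core technical difficulty''---is precisely where essentially all of the content of the theorem lives, and you have not proved it. The configurations you do treat are the easy ones, and even there the reduction does not close: stripping a common element $1\in T\cap S$ with property (iii) turns the desired comparison $w([k],S)\geq w([k]\setminus t'\cup t,\,S)$ into one whose left-hand row set is $\{2,\dots,k\}$, which is no longer top-justified, hence not an instance of the statement your inner induction is about; the hypotheses (i)--(v) and the submodularity of $F$ are not invariant under shifting indices (for instance, (i) guarantees finiteness only for row sets of the form $\{1,\dots,|S|\}$). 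The paper sidesteps exactly this by formulating its intermediate results for arbitrary intervals of rows (Lemmas~\ref{lem:AddOneElemB} and~\ref{k-j=1B}) and only at the very end shifting an interval down to $[|S|]$. Moreover, in the hard residual cases the paper's three-term Plücker relations do not simply ``degenerate'' because property (ii) kills competing terms; instead one must choose the relation together with an auxiliary maximizing pair (see the choice of $U=\{x,y\}$ in the proof of Lemma~\ref{lem:justifyInequality}) and combine the resulting two-case analysis with submodularity via Lemma~\ref{lem:AddOneElemB} and with two intertwined inductions. Your sketch gives no mechanism for reaching those configurations, so the proposal has a genuine gap at its center rather than a routine verification left to the reader.

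A secondary problem is the outer step itself: you invoke the full symmetric exchange axiom for valuated matroids, but hypothesis (v) only supplies the three-term tropical Plücker relations, and the theorem is stated for an arbitrary function $w$ with values in $\R\cup\{-\infty\}$. Deducing the full exchange property from the three-term relations requires extra input (all values finite, or the support of $T\mapsto w(T,S)$ being the bases of a matroid), neither of which is among the stated hypotheses; it holds in the realizable situation of Proposition~\ref{prop: properties}, but then your argument would prove a weaker, non-abstract statement unless you supply this deduction. By contrast, the paper's route (justify $T$ to an interval starting at $\min(T)$, then slide that interval leftward one step at a time) uses only the listed properties.
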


For the remainder of the section, we assume $w$ is an arbitrary function satisfying 
these properties and for which $F(S) =w(\{1,..,|S|\}, S)$ is submodular. Our goal is to prove this theorem. We first prove it in the case $|S|=1$.

\begin{lemma}\label{lem:1x1caseB}
$w(j,k) \geq w(j+1,k)$ for all $1\leq j,k \leq n$.
\end{lemma}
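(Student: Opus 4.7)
The plan is to deduce the inequality directly from the submodularity of $F$ and properties (iii)–(iv), without invoking the Pl\"ucker relations (v). First observe the easy case: if $k \leq j$, then $\{k\} \preceq \{j+1\}$ strictly, so property (ii) gives $w(j+1,k) = -\infty$ and there is nothing to prove.

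For $k \geq j+1$, I apply the local submodular inequality for $F$ at the set $S_0 = \{1, 2, \ldots, j-1\}$ (possibly empty) with the two new elements $j$ and $k$, both of which lie outside $S_0$:
\[
F(S_0) + F(S_0 \cup \{j, k\}) \;\leq\; F(S_0 \cup \{j\}) + F(S_0 \cup \{k\}).
\]
Each of the four $F$-values is then expanded using property (iii) with the common interval $I = S_0$. Setting $C = \sum_{i=1}^{j-1} w(i,i)$, the three simpler terms become $F(S_0) = C$, $F(S_0 \cup \{j\}) = C + w(j,j)$, and $F(S_0 \cup \{k\}) = C + w(j,k)$; the hypothesis $\max(S_0) \leq \min(T \cup S)$ needed by (iii) is satisfied because $\max(S_0) = j-1$ is strictly less than the smallest element of each residual index set.

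The decisive step is the fourth term. Whether $k = j+1$ or $k > j+1$, one checks that (iii) reduces $F(S_0 \cup \{j,k\})$ to $C + w(\{j,j+1\},\{j,k\})$: in both configurations the residual row set is $\{j, j+1\}$ and the residual column set is $\{j,k\}$. I then apply property (iv) with $T = \{j\}$, $t = j+1$, and $s = k$; the hypothesis $\max(\{j\}) = j \leq \min\{j+1, k\}$ holds because $k \geq j+1$, so
\[
w(\{j,j+1\},\{j,k\}) \;=\; w(j,j) + w(j+1,k).
\]
Substituting back into the submodular inequality and cancelling $C$ and $w(j,j)$ from both sides yields $w(j+1,k) \leq w(j,k)$, as required.

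The argument is short and I do not anticipate any real obstacle; the only care needed is verifying the inequalities $\max(I) \leq \min(T \cup S)$ required by (iii) and (iv) in each of the four invocations. The finiteness of $w(j,j)$, which justifies the cancellation, follows by an easy induction on $j$ from the identity $F(\{1,\ldots,j\}) = C + w(j,j)$ combined with property (i).
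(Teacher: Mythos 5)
Your proof is correct and follows essentially the same route as the paper: the local submodular inequality for $F$ at $S_0=\{1,\dots,j-1\}$ with the elements $j$ and $k$, expansion of the four terms via properties (iii)--(iv), and cancellation of the diagonal contributions. Your explicit verification of the hypotheses of (iii)--(iv) and of the finiteness of the cancelled terms is a harmless refinement of the same argument.
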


\begin{proof}
If $j+1>k$, then $w(j+1,k)=-\infty$ by property (ii), and the inequality holds trivially. 
So we can assume that $j+1\leq k$. 
Let $I$ denote the interval $\{1,\hdots, j-1\}$. By submodularity, 
\begin{align*}
 w(Ij,Ij) + w(Ij,Ik) 
&= F(Ij) + F(Ik)\\
&\geq F(I) + F(I\cup \{j,k\})\\
&=w(I,I) + w( I\cup \{j,j+1\}, I\cup \{j,k\}).
\end{align*}
Using property (iv), this gives that 
\[
\sum_{i\in I} w(i,i) + w(j,j) + \sum_{i\in I} w(i,i) + w(j,k) \geq \sum_{i\in I} w(i,i) + \sum_{i\in I} w(i,i) + w(j,j) + w(j+1,k). 
\]
Canceling out diagonal terms gives $ w(j,k) \geq \ w(j+1,k)$. 
\end{proof}

For an interval $I $, we use $1+I$ to denote the shifted interval $\{1+i : i\in I\}$.

\begin{lemma}\label{lem:AddOneElemB}
Let $I$ be an interval and subset $S\subseteq [n]$ of the same size with $I \preceq S$. 
Then 
\[
w(I,S) - w(1+I, S) \geq w(I\backslash\!\max(I), S\backslash\!\max(S)) - 
w((1+I)\backslash\!\max(1+I), S\backslash\!\max(S)) \geq 0.
\]
\end{lemma}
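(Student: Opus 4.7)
The plan is to prove Lemma~\ref{lem:AddOneElemB} by induction on $k = |I|$. The base case $k = 1$ is immediate: both $I \setminus \max(I)$ and $(1+I) \setminus \max(1+I)$ are empty, so the middle difference in the chained inequality vanishes and the statement reduces to $w(a, S) \geq w(a+1, S) \geq 0$, which is Lemma~\ref{lem:1x1caseB}.

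For the inductive step, set $m = \max(I)$, $m' = \max(S)$, $J = I \setminus \{m\}$, and $S' = S \setminus \{m'\}$. The rightmost inequality $w(J, S') \geq w(1+J, S')$ follows by applying the inductive hypothesis to the interval $J$ of size $k-1$; the hypothesis $J \preceq S'$ is immediate from $I \preceq S$ because the $j$-th elements of $J$ and $S'$ are $a+j-1$ and $s_j$ respectively, with $s_j \geq a+j-1$ already holding.

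For the first inequality I would split into cases according to $s_{k-1}$. In the principal case $s_{k-1} \leq m$, the condition $\max(J \cup S') = \max(m-1, s_{k-1}) \leq m \leq m'$ (and similarly for $(1+J)\cup S'$) lets property (iv) factor off the top row-column pair from both $(I,S)$ and $(1+I,S)$, giving
\[
w(I, S) = w(J, S') + w(m, m') \quad\text{and}\quad w(1+I, S) = w(1+J, S') + w(m+1, m').
\]
Subtracting yields
\[
w(I, S) - w(1+I, S) = \bigl[w(J, S') - w(1+J, S')\bigr] + \bigl[w(m, m') - w(m+1, m')\bigr],
\]
and the second bracket is nonnegative by Lemma~\ref{lem:1x1caseB}.

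When $s_{k-1} > m$, property (iv) does not split off $m$ from $w(I, S)$, and the argument proceeds instead via property (v) (the three-term tropical Pl\"ucker relation). I would apply it to the size-$(k-2)$ set $U = \{a+1, \ldots, m-1\}$ together with a four-element subset of $[n] \setminus U$ containing $\{a, m, m+1\}$ and a carefully chosen fourth index (typically $s_{k-1}$, when $s_{k-1} \geq m+2$ makes the quadruple distinct). Once the three resulting tropical products are simplified via property (iv) on those sets whose new maximum is $s_{k-1}$, the ``attained twice'' conclusion, combined with Lemma~\ref{lem:1x1caseB} and the inductive hypothesis, delivers the first inequality. The main obstacle here is the casework: pinning down the right Pl\"ucker quadruple and handling the degenerate boundary sub-case $s_{k-1} = m+1$, where the four indices collide, so that a separate identity (or a direct combination of properties (iii) and (iv)) is needed to close that corner.
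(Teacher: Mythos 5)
Your base case and your first case ($s_{k-1}\le \max(I)$, where property (iv) splits off the pair $(\max(I),\max(S))$ from both $w(I,S)$ and $w(1+I,S)$ and Lemma~\ref{lem:1x1caseB} finishes) are fine, as is the verification that $J\preceq S'$ so the induction can run. But the substantive case $s_{k-1}>\max(I)$ is not a proof: it is a plan whose ``main obstacle'' you yourself leave open, and when one actually carries out the Pl\"ucker computation you sketch (quadruple $\{a,m,m+1,s_{k-1}\}$ with $U=\{a+1,\dots,m-1\}$, $a=\min(I)$, $m=\max(I)$, stripping $w(s_{k-1},s_k)$ via (iv) where possible), every branch of the ``max attained twice'' case analysis reduces to an inequality of the form $w(I,S)\ge w\bigl(I\cup\{m+1\}\backslash m,\,S\bigr)$ or its analogue for the sub-interval $I\backslash a$. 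That is exactly the statement of Lemma~\ref{k-j=1B} (with $t=\max(I)+1$), which in the paper is proved \emph{after} and \emph{using} the present lemma; so as written your route is circular, or at best requires a joint induction with Lemma~\ref{k-j=1B} that you have not set up. (Also a small slip in the base case: the right-hand quantity is $w(\varnothing,\varnothing)-w(\varnothing,\varnothing)=0$, so what is needed is $w(a,s)\ge w(a+1,s)$; the extra claim ``$\ge 0$'' is neither needed nor justified.)

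A structural symptom of the gap is that your inductive step never invokes the submodularity of $F$ beyond its rank-one consequence Lemma~\ref{lem:1x1caseB}, whereas submodularity is the engine of the paper's argument. There the left inequality is a single submodular exchange: assuming $\min(I)=i<\min(S)$, apply
\[
F([i-1]\cup S)+F([i]\cup S\backslash s)\ \ge\ F([i-1]\cup S\backslash s)+F([i]\cup S),
\]
then use property (iii) to strip the diagonal prefixes $[i-1]$ and $[i]$; the four $F$-values become $w(I,S)$, $w((1+I)\backslash(j+1),S\backslash s)$, $w(I\backslash j,S\backslash s)$, $w(1+I,S)$, and rearranging gives precisely the left inequality. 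The case $\min(I)=\min(S)$ is handled by first stripping the common initial run $J=I\cap S$ with (iii)/(iv), and the right inequality then follows by induction on $|S|$ with the left inequality as the inductive step and Lemma~\ref{lem:1x1caseB} as the base. To salvage your approach you would either need to import this submodularity step (at which point you have reproduced the paper's proof) or genuinely prove the k-j=1B-type exchange independently, which you have not done.
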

\begin{proof}Let $s = \max(S)$, $i= \min(I)$, and $j = \max(I)$. Then $[i-1] \cup I = [j]$. 

First we assume that $i<\min(S)$, then $[i] \cap S = \phi$. For the left inequality, we use the submodularity inequality 
\[
F([i-1]\cup S) +
F([i]\cup S\backslash s) 
\geq 
F( [i-1]\cup S\backslash s) + F([i]\cup S).
\]
Using property (iv) and canceling our diagonal terms $\sum_{k=1}^{i-1}w(k,k) + \sum_{k=1}^{i}w(k,k)$, this inequality becomes
$$w(I,S) + w((I+1)\backslash(j+1)),S\backslash s)
\geq 
 w(I\backslash j,S\backslash s)+ w(1+I,S).
$$
The claim follows after rearranging terms. 

If $\min(S) = i$, then let $J$ be an interval such that $\min(J) = i$ and $J = I\cap S$. Let $I' = I \backslash J$ and $S' = S \backslash J$. Then, by the previous part, the claim holds for $I'$ and $S'$, that is
\[
w(I',S') - w(1+I',S') \geq w(I'\backslash\!\max(I'),S\backslash\!\max(S)) - w((1+I')\backslash\!\max(1+I'),S'\backslash\!\max(S')).
\]
By (iii), $w(I,S) = \sum_{k=i}^{\max(J)} w(k,k) + w(I',S')$ and by (iv) we have $w(1+I,S) = \sum_{k=i}^{\max(J)} w(k+1,k) + w(1+I',S')$. Thus, adding $\sum_{k=i}^{\max(J)} w(k,k) - w(k+1,k)$ to both sides of the above inequality proves the result.

The right inequality holds by induction on $|S|$. The base case $|I\backslash \max(I)|=1$ is covered by Lemma~\ref{lem:1x1caseB}. The inductive step is given by the left inequality. 
 \end{proof}

\begin{lemma}\label{k-j=1B}
Let $I$ be an interval and a subset $S\subseteq [n]$ of the same size. 
For any $t>\max(I)$ such that $(It\backslash\!\max(I)) \preceq S$, 
	\[w(I,S) \geq w(It\backslash\!\max(I),S)). \]
\end{lemma}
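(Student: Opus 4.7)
The plan is to reduce first to the case $I = [m]$ via property (iii), and then induct on $t - m$, handling the base case $t = m+1$ via property (iv) when possible and the inductive step via property (v) (the 3-term tropical Pl\"ucker relations).

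\textbf{Reduction step.} Writing $I = \{a, a+1, \ldots, b\}$ with $m = b-a+1 = |S|$, the hypothesis $I \preceq S$ forces $\min(S) \geq a$, so the interval $[a-1]$ is disjoint from $I\cup S$. Applying property (iii) with the prefix interval $[a-1]$ to both $w(I,S)$ and $w(It \setminus \max(I), S)$ adds the same constant $\sum_{i=1}^{a-1} w(i,i)$ to each, reducing the inequality to $w([m], S') \geq w([m-1]\cup t, S')$ where $S' = [a-1]\cup S$. I continue to write $S$ and $m$ in place of $S'$ and $b$, so the task becomes: for $[m] \preceq S$ and $[m-1]\cup t \preceq S$ with $t > m$, show $w([m], S) \geq w([m-1]\cup t, S)$.

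\textbf{Base case $t = m+1$.} Let $s_{m-1}$ be the second-largest element of $S$. If $s_{m-1} \leq m$, property (iv) applied with $T_0 = [m-1]$ and $S_0 = S \setminus \{s_m\}$ factors both minors: $w([m], S) = w([m-1], S_0) + w(m, s_m)$ and $w([m-1]\cup(m+1), S) = w([m-1], S_0) + w(m+1, s_m)$, since $\max(T_0 \cup S_0) = \max(m-1, s_{m-1}) \leq m \leq m+1, s_m$. Subtracting and invoking Lemma~\ref{lem:1x1caseB} gives $w(m, s_m) - w(m+1, s_m) \geq 0$. If $s_{m-1} \geq m+1$ (so $s_m \geq m+2$), property (iv) cannot directly factor $w([m], S)$, and one must invoke property (v) with base $R = [m-2]$ and the four indices $\{m-1, m, m+1, s_m\}$, combining the 3-term tropical Pl\"ucker relation with submodularity of $F$ and Lemma~\ref{lem:AddOneElemB} to deduce the inequality.

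\textbf{Inductive step $t \geq m+2$.} Applying property (v) with base $R = [m-2]$ and indices $\{m-1, m, t-1, t\}$ yields three tropical sums: $A = w([m], S) + w([m-2]\cup\{t-1, t\}, S)$, $B = w([m-1]\cup(t-1), S) + w([m-2]\cup\{m, t\}, S)$, and $C = w([m-1]\cup t, S) + w([m-2]\cup\{m, t-1\}, S)$, whose maximum is attained at least twice. The inductive hypothesis at $t' = t-1$ gives $w([m], S) \geq w([m-1]\cup(t-1), S)$. A case analysis of which pair of $\{A, B, C\}$ attains the maximum, combined with auxiliary monotonicity estimates for the ``second factors'' obtained by applying the lemma (or its single-swap consequences) to smaller configurations, then forces $w([m], S) \geq w([m-1]\cup t, S)$.

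\textbf{The main obstacle} is the subcase $s_{m-1} \geq m+1$ of the base case, where property (iv) is unavailable and one must rely solely on the 3-term Pl\"ucker relations together with submodularity of $F$. Property (ii) may help by forcing some auxiliary terms to equal $-\infty$, which constrains the max-twice condition and isolates the desired inequality; nonetheless, a delicate combinatorial analysis of the possible matchings in the Pl\"ucker relation appears unavoidable. A similar but milder bookkeeping is needed in the inductive step to propagate the inequality through the 3-term relation.
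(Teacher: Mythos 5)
Your write-up is a sketch rather than a proof: the two places where the real difficulty lives are exactly the places you leave as assertions. In the base case you dispose of the easy subcase $s_{m-1}\le m$ via property (iv), but for $s_{m-1}\ge m+1$ you only say that property (v) ``combined with submodularity and Lemma~\ref{lem:AddOneElemB}'' deduces the inequality, and in the inductive step you say a case analysis of which of $A,B,C$ attain the maximum, ``combined with auxiliary monotonicity estimates for the second factors,'' forces the conclusion. Those auxiliary estimates are the crux, and as you have set things up they are not available. With your Pl\"ucker choice $R=[m-2]$, indices $\{m-1,m,t-1,t\}$, the companion factors are $w([m-2]\cup\{t-1,t\},S)$, $w([m-2]\cup\{m,t\},S)$, $w([m-2]\cup\{m,t-1\},S)$; comparing these requires monotonicity of $w(\cdot,S)$ in the Gale order between row sets that are \emph{not} of the interval-with-top-element-swapped form covered by the lemma, and the paper's closing remark (the $4\times 4$ example with $w(\{1,3\},\{3,4\})=-1<0=w(\{2,3\},\{3,4\})$) shows precisely that such monotonicity fails in general. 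Moreover, even your base-case subcase, if carried out with indices containing $s_m=\max(S)$, produces after stripping $s_m$ by property (iv) an instance of the lemma for the smaller set $S\setminus s_m$ — so one needs an induction on $|S|$, which your scheme (a single induction on $t-m$) does not provide.

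For comparison, the paper's proof avoids both problems by a different organization: it first handles $t=\max(S)$ directly from submodularity of $F$ (plus Lemma~\ref{lem:1x1caseB} and property (iv)), and for $t<\max(S)$ it runs a double induction on $|S|$ and on $t-\min(I)$, applying property (v) to the four indices $\{i,j,t,\max(S)\}$ with $R=I\setminus\{i,j\}$. Because every companion factor then contains $\max(S)$, it can be stripped by property (iv), so in one case the needed comparison is an instance of the lemma with $|S|$ reduced, and in the other case (the two non-identity terms equal) the inequality follows from Lemma~\ref{lem:AddOneElemB} together with the induction on $t-\min(I)$ applied to the shifted interval $1+I$, with a common-prefix factoring when $\min(I)=\min(S)$. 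Your preliminary reduction to $I=[m]$ via property (iii) is fine, but to complete your argument you would essentially have to replace your choice of Pl\"ucker indices and induction variable by the paper's; as it stands, the proposal has a genuine gap.
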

\begin{proof} 
Let $s = \max(S)$, $s' = \max(S\backslash\!s)$,  $i= \min(I)$, and $j = \max(I)$. 
	
($t=s$) By submodularity of $F$, we have 
\[
F([i-1]\cup S)+F([s']) \geq 
F([i-1]\cup S\backslash s) +F([s']s).
\]
Canceling diagonal terms $\sum_{k=1}^{i-1}w(k,k) + \sum_{k=1}^{s'}w(k,k)$ on both sides gives 
\[
w(I,S) \geq w(I\backslash j, S\backslash s) + w(s'+1, s).
\]
By Lemma~\ref{lem:1x1caseB}, $w(s'+1, s)\geq w(s,s)$. Then, using property (iv), we find
\[
w(I,S) \geq w(I\backslash j, S\backslash s) + w(s, s) = w((Is\backslash j), S).\]

($t< s$) We induct on $|S|$ and then $t-\min(I)$. The case $|S|=1$ is Lemma~\ref{lem:1x1caseB}. If $t-\min(I)=1$, then the inequality holds also by Lemma~\ref{lem:1x1caseB}. We therefore assume $|S|\geq 2$ and $t-\min(I)>1$. 

By property (v), the following maximum is attained at least twice
	\[
	\max\left\{ 
	w(Iab\backslash\{i,j\},S)) +  w(Ics\backslash\{i,j\},S)
	\right\},
	\]
	where the max is taken over all assignments $\{a,b,c\} = \{i,j, t\}$. 
	We break the argument into two cases, depending on which terms attain this maximum. 

		(Case 1) If the term  with $\{a,b\} = \{i,j\}$ attains the maximum then 
\[w(I,S) +  w(Its\backslash\{i,j\},S)\geq w(It\backslash j,S) +  w(Is\backslash i,S).
\]
Rearranging terms gives
	\begin{align*}
		w(I,S))  -  w(It\backslash j,S)
		&\geq w(Is\backslash i,S) - w(Its\backslash\{i,j\},S) \\
		& =w(I\backslash i),S\backslash s) - w(It\backslash\{i,j\},S\backslash s) \geq 0.
	\end{align*}
The last inequality follows by induction on $|S|$. 

		(Case 2) If the term  with $\{a,b\} = \{i,j\}$  does not attain the maximum, then the other two terms are equal. That is, 
			\[
	w(It\backslash j,S) + w(Is\backslash i,S)  = 
		w(Is\backslash j,S)  +  w(It\backslash i,S).	
	\]
Rearranging terms and assuming that $i<\min(S)$ gives		
\begin{align*}
w(It\backslash j,S) - w(It\backslash i,S)	& = 
		w(Is\backslash j,S) -  w(Is\backslash i,S) \\
		& = 
		w(I\backslash j,S\backslash s) -  w(I\backslash i,S\backslash s) \\
		& \leq w(I,S) -  w(1+I,S) \\
			& \leq w(I,S)  -  w(((1+I)t\backslash (j+1),S)\\
		& = w(I,S) - w(It\backslash i,S).
	\end{align*}
	The first inequality follows from Lemma~\ref{lem:AddOneElemB} and the fact that $(1+I)\backslash\!\max(1+I) = I\backslash i $, and the last inequality follows by induction on $t-\min(I)$, since $t -\min(1+I) < t -\min(I)$. We have $1+I \preceq S$ since $i<\min(S)$. 
	Simplifying the final inequality gives 
	\[w(It\backslash j,S) \leq w(I,S) ,\]
	as desired. 
 
 Now if $ i = \min(S)$, then let $J$ be an interval such that $\min(J) = i$ and $J = I\cap S$. Let $I' = I \backslash J$ and $S' = S \backslash J$. Then, by the previous part, the claim holds for $I'$ and $S'$, that is
\[
w(I't\backslash j ,S') \leq w(I',S').
\]
By (iii), $w(I,S) = \sum_{k=i}^{\max(J)} w(k,k) + w(I',S')$. Thus, adding $\sum_{k=i}^{\max(J)} w(k,k)$ to both sides of the above inequality finishes the proof.
\end{proof}

\begin{lemma}\label{lem:justifyInequality}
Let $S, T\subseteq [n]$ with $T\preceq S$.
Let $I$ be the interval with $\min(I)=\min(T)$ and $|I|=|S|=|T|$. 
Then $w(I,S) \geq w(T,S)$.  
\end{lemma}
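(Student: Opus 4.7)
My plan is to proceed by induction on $k := |T| = |S|$. The base case $k = 1$ is trivial, since then $T = I = \{t_1\}$. For the inductive step I write $T = \{t_1 < t_2 < \cdots < t_k\}$ and $I = \{t_1, t_1+1, \ldots, t_1+k-1\}$: when $T = I$ the conclusion is immediate, and otherwise the chain inequalities $t_l \geq t_1 + l - 1$ together with $T \neq I$ force $t_k > \max I$.

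The core idea is to bridge $I$ and $T$ through the intermediate set $I_1 := (I \setminus \{\max I\}) \cup \{t_k\}$. First I will apply Lemma~\ref{k-j=1B} with the interval $J = I$ and bump value $t = t_k$ to obtain $w(I, S) \geq w(I_1, S)$; the hypothesis $I_1 \preceq S$ of that lemma follows from $T \preceq S$ combined with the chain inequality. It then remains to establish $w(I_1, S) \geq w(T, S)$, where $I_1$ and $T$ share $t_k$ as their maximum while having bottom parts $I \setminus \{\max I\}$ (an interval) and $T \setminus \{t_k\}$, respectively.

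To handle this residual comparison, I split on the location of $s_{k-1}$. When $s_{k-1} \leq t_k$, property~(iv) of Proposition~\ref{prop: properties} applies to factor the top element $t_k$ out of both $w(I_1, S)$ and $w(T, S)$, reducing the desired inequality to $w(I \setminus \{\max I\}, S \setminus \{s_k\}) \geq w(T \setminus \{t_k\}, S \setminus \{s_k\})$, which is exactly the inductive hypothesis at size $k-1$ applied to $T \setminus \{t_k\}$ and its canonical interval starting at $t_1$. When instead $s_{k-1} > t_k$ but $t_1 = s_1$, property~(iii) lets me strip the common minimum $t_1$, again reducing to a size-$(k-1)$ problem; any mismatch between the canonical intervals of the reduced sets (starting at $t_1 + 1$ versus $t_2$) will then be absorbed by iterated applications of Lemma~\ref{lem:AddOneElemB}, whose hypotheses I verify via the chain inequality.

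The hard part will be the sub-case in which $s_{k-1} > t_k$ and $t_1 < s_1$ simultaneously, since then neither the top factorization (property~(iv)) nor the bottom stripping (property~(iii)) can be applied directly. I expect to handle it by invoking the tropical Pl\"ucker relation (property~(v)) with the set $U = T \setminus \{t_{k-1}, t_k\}$ and the 4-tuple $(t_{k-1}, t_k, s_{k-1}, s_k)$---whose entries are guaranteed pairwise distinct and outside $U$ under the current hypotheses---and then extracting the desired inequality by a case analysis on which pair of terms in the resulting three-term relation attains the tropical maximum, in a manner parallel to the proof of Lemma~\ref{k-j=1B}, invoking Lemmas~\ref{lem:1x1caseB} and~\ref{lem:AddOneElemB} at the elementary steps.
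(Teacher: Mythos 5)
Your opening moves are fine: the reduction to the case $T\neq I$, the inequality $w(I,S)\geq w(I_1,S)$ via Lemma~\ref{k-j=1B} (whose hypothesis $I_1\preceq S$ does follow from $T\preceq S$), and the case $s_{k-1}\leq t_k$, where property~(iv) splits off $(t_k,s_k)$ and the inductive hypothesis applies. The trouble starts with the remaining residual comparison $w(I_1,S)\geq w(T,S)$ when $s_{k-1}>t_k$. In your case $t_1=s_1$, stripping $t_1$ via property~(iii) leaves you needing $w(\{t_1+1,\dots,t_1+k-2,t_k\},\,S\setminus s_1)\geq w(T\setminus t_1,\,S\setminus s_1)$. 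This is \emph{not} an instance of your inductive hypothesis (the left-hand set is not the canonical interval of $T\setminus t_1$, which starts at $t_2$), and Lemma~\ref{lem:AddOneElemB} cannot absorb the mismatch: it only compares an interval with its shift by one, whereas your left-hand set is an interval with the far element $t_k$ appended. Worse, the statement you need here is of the form ``replacing a row index by a smaller one does not decrease $w$,'' which is exactly the kind of dominance monotonicity the paper's final Remark shows can fail (e.g.\ $w(\{1,3\},\{3,4\})<w(\{2,3\},\{3,4\})$ for a matrix with submodular top-justified minors); so this step cannot be dismissed as routine. The same concern applies globally: your whole plan commits to the single intermediate set $I_1$, i.e.\ to the unproven claim $w(I_1,S)\geq w(T,S)$, which is a strictly stronger statement than anything your cited lemmas provide.

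The remaining case ($s_{k-1}>t_k$, $t_1<s_1$) is the actual crux, and your proposal only gestures at it (``I expect to handle it by invoking property~(v) with the quadruple $(t_{k-1},t_k,s_{k-1},s_k)$''). Note that in all the successful Pl\"ucker arguments in the paper (Lemma~\ref{k-j=1B} and the proof of the lemma itself) the only column value ever used as a row index is $\max(S)$, precisely so that property~(iv) can factor it back out; terms like $w(U\cup\{s_{k-1}\},S)$ arising from your quadruple cannot be factored or bounded by the available lemmas, so there is no evident mechanism for your case analysis to close. By contrast, the paper avoids committing to one intermediate set: it inducts on $|T|$ and on $|T|-|J|$, where $J$ is the initial consecutive run of $T$, and proves the key bound \eqref{eq:InductiveInequality}, namely $w(T,S)\leq\max_{t\in T\setminus J}w(T(j{+}1)\setminus t,\,S)$ --- a maximum over several swaps, obtained by applying property~(v) with a carefully chosen pair $\{x,y\}$ maximizing $w(T(j{+}1)\setminus U,\,S\setminus s)$ --- and then handles the case $\max(T)=\max(S)$ by a separate induction stripping the common terminal run. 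Until you either prove your intermediate inequality $w(I_1,S)\geq w(T,S)$ in the two hard subcases or replace it with a max-over-swaps bound of the paper's type, the argument has a genuine gap.
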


\begin{proof} First we assume that $\max(S) \neq \max(T)$ and we proceed by induction on $|T|$. 
    For $|T|=1$, $I= T = \{\min(T)\}$ so the statement holds trivially.  Therefore we can assume $|S| = |T|\geq 2$.

Consider the first consecutive run of elements of $T$. 
That is, let $J$ be the largest interval with $\min(J) = \min(T)$ and $J\subseteq T$. 
We also induct on $|T|-|J|$.  If $|J|=|T|$, then $I = J = T$ and the inequality holds trivially. The case $|T|-|J|=1$ follows from Lemma~\ref{k-j=1B}.
	Therefore we may suppose $|T|-|J|\geq 2$.
	Let $j = \max(J)$. We will show by induction on $|T|$ that 
	\begin{equation}\label{eq:InductiveInequality}
	w(T,S) \leq \max_{t\in T\backslash J}  w((T(j+1)\backslash t, S).
	\end{equation}
	Since $(T(j+1)\backslash t)$ has a longer initial consecutive sequence 
	than $T$, it follows by induction on $|T|-|J|$ that 
	$w((T(j+1)\backslash t, S)\leq w(I,S)$, implying $w(T,S)\leq w(I,S)$, as desired.

	Let $s = \max(S)$. Suppose that $U = \{x,y\}$ attains the maximum 
	\[
	\max_U w(T(j+1)\backslash U,S\backslash s)
	\]
	taken over subsets $U\subseteq T\backslash J$ with $|U| = 2$.
By property (v), the maximum of 
\[
		\max_{\{a,b,c\} = \{j+1, x,y\}} w((Tab\backslash\{x,y\}) , S) + w((Tcs\backslash \{x,y\}) , S)
\]
	is attained at least twice. 
	In particular, after possibly relabeling $x,y$,  
	we have
	\[
	w(T, S) + w((Ts(j+1)\backslash \{x,y\}), S) \leq 
	w((T(j+1)\backslash y) , S) + w(Ts\backslash x, S).\]
 By property (iii), we can cancel the diagonal terms $w(s,s)$ and rearrange to get 
			\[
	 w((T(j+1)\backslash \{x,y\}) , S\backslash s)-w(T\backslash x, S\backslash s) \leq 
	w((T(j+1)\backslash y) , S)-w(T, S) .\]
	By induction on $|T|$, we have that 
	\[
	w(T\backslash x, S\backslash s)
	\leq 
	\max_t w((T(j+1)\backslash\{x,t\}), S\backslash s) 
	\]
	where $t$ runs over $(T\backslash J)\backslash x$. 
	By assumption on $\{x,y\}$, 
	this maximum is attained by $t=y$, giving that 
	\[
	w(T\backslash  x, S\backslash s) 
	\leq 
	w(T(j+1)\backslash\{x,y\}, S\backslash s).
	\]
	Together with the inequality from above, we get that 
$$
0 \leq w(T(j+1)\backslash \{x,y\}, S\backslash s) - w(T\backslash x , S\backslash s)) \leq 
w(T(j+1)\backslash y , S)) - w(T, S),$$
	showing that 
	\[w(T,S) \leq w(T(j+1)\backslash y , S))  \leq \max_{t\in T\backslash J}  w(T(j+1)\backslash  t, S).\]
 
 If $\max(S)=\max(T)=s$, let $J$ be the interval such that $\max(J) = \max(S) = \max(T)$, $J \subset T\cap S$ and it is of maximum length. We induct on the size of $J$. If $J = 1$, that is if $\max(S\backslash s)\neq \max(T\backslash s)$, then let $I' = I\backslash \!\max(I)$, $S' = S \backslash s$, and $T' = T \backslash s$. Then by the previous part $w(I',S')\geq w(T',S')$. Using Lemma~\ref{k-j=1B}, we get
 \begin{align*}
 w(I,S) &\geq w(Is\backslash\!\max(I) ,S) \\
        &= w(I'\!s,S)\\
        & = w(I',S') + w(s,s) \\
        &\geq w(T',S') + w(s,s)\\
        &= w(T,S).
 \end{align*}
  For the inductive step, we use similar argument as the one above and this finishes the proof. 
\end{proof}
	
Now we are ready to complete the proof of this section. 
	
\begin{proof}[Proof of Theorem~\ref{thm:Techical}] Let $S, T\subseteq [n]$. 
By Lemma~\ref{lem:justifyInequality}, 
$w(T,S) \leq w(I,S)$, where $I$ is the interval with $\min(I) = \min(T)$ and $|I|=|T|$. 
By Lemma~\ref{lem:AddOneElemB}, 
$ w(1+J, S)\leq w(J,S) $ for every interval $J$. 
Inducting then gives $w(k+J, S) \leq w(J,S) $ for arbitrary $k\in \Z_{>0}$. 
In particular, for $J = \{1,\hdots, |S|\}$ and $k=\min(T)-1$, this gives 
\[
w(T,S) \leq w(I,S) \leq w(\{1,\hdots, |S|\}, S),
\]
as desired.
\end{proof}	

\begin{remark} One might hope for inequalities of the form 
$w(T,S) \geq w(T',S)$ when $T\succeq T'$, but these do not hold in general. 
For example, consider the matrix 
	$$B = \begin{pmatrix}
		1 & 1 & 1 & 1\\
		0 & 1 & 2 & 3\\
		0 & 0 & 1 & 1 + t\\
		0 & 0 & 0 & 1\\
	\end{pmatrix}.$$
This matrix is upper triangular and all of its upper justified minors are nonzero
and have valuation $0$. Therefore the function $F$ is identically $0$ and thus submodular. 
However $w(\{1,3\},\{3,4\})) = -1$, whereas $w(\{2,3\}, \{3,4\})=0$.
\end{remark}

\begin{example}[$n=8, k=4$]
	To illustrate the strategy of the proof above, consider  $S = \{5,6,7,8\}$, and $T = \{1,4,5,7\}$. Here $J = \{1\}$. 
	Inequality \eqref{eq:InductiveInequality}  states that
	\[w(1457,S)\leq \max\{w(1257,S), w(1247,S), w(1245,S)\}.\]
	Let us show this inequality in the case that 
	$w(127,567)$ achieves the maximum among $\{w(124,567), w(125,567), w(127,567)\}$. In the proof above, this corresponds to $\{x,y\}= \{4,5\}$. 
	By the tropical Pl\"ucker relations, 
	the maximum
	\[\max_{\{a,b,c\} = \{2,4,5\}}\{w(17ab,S) + w(178c,S)\} \text{ is attained at least twice,}\]
	from which we can conclude that 
	\[
	w(1457,S) + w(1278,S) \leq 
	w(127a,S) + w(178b,S)
	\]
	for some assignment of $\{a,b\} = \{4,5\}$. Then $w(xyz8,S) = w(xyz,S')+w(8,8)$ for any subset $\{x,y,z\}\subset[7]$ where $S' = \{5,6,7\}$. 
	The inequality above give 
	\begin{align*}
		w(127a,S) - w(1457,S)
		& \geq w(1278,S) - w(178b,S)\\
		& =w(127,S') - w(17b,S').
	\end{align*}
	By induction, $w(17b,S')$ is bounded above by $\max\{w(127,S'),w(12b,S')\}$, which is bounded above by $w(127,S')$, by assumption. Therefore this difference is nonnegative, giving 
	\[
	w(1457,S) \leq w(127a,S) \leq \max_{x,y\in \{4,5,7\}}w(12xy,S).
	\]
	
	From this, we can continue by induction. 
	Using \eqref{eq:InductiveInequality} again gives 
	\begin{align*}
		w(1257,S) & \leq \max\{w(1237,S), w(1235,S)\},\\
		w(1247,S) & \leq \max\{w(1237,S), w(1234,S)\},\\
		w(1245,S)) & \leq \max\{w(1235,S), w(1234,S)\}.
	\end{align*}
	Finally, by Lemma~\ref{k-j=1B}, we see that these are all bounded above by  $w(1234,S)$. 
	That is, for $x,y,z \geq 4$
	\[
	\max_{x,y,z\geq 4} w(1xyz,S) \leq \max_{x,y\geq 4} w(12xy,S)  \leq \max_{x\geq 4} w(123y,S) \leq w(1234,S).
	\]	
\end{example}

It would be desirable to have analogues of Theorems~\ref{thm:TropGrassFlagGrass_AnyField} and~\ref{thm:TropGrassFlagGrassPD} for functions $F$ taking values in $\Gamma\cup\{-\infty\}$, rather than just $\Gamma$. 
However, the statement of Lemma~\ref{lem:TechnicalLemma}, which was needed in our proofs, would not be true if we just replaced $\R$ with $\R\cup \{-\infty\}$.  For example, matrices of the form $B = \begin{pmatrix}
    0 & a \\ 0 & b
\end{pmatrix}$ satisfy the submodularity condition but need not be top heavy.  The proof above would not work because we are not able to cancel out $-\infty$ from both sides of an inequality. To remedy this, we offer the following conjectural analogue of Lemma~\ref{lem:TechnicalLemma}, which should be enough to prove the analogues of Theorems~\ref{thm:TropGrassFlagGrass_AnyField} and~\ref{thm:TropGrassFlagGrassPD} with values in $\Gamma\cup\{-\infty\}$.

\begin{conjecture}    
\label{conj:technical}
    Let $B\in \cK^{n\times n}$ be an upper triangular matrix of rank $r$  whose top left $r\times r$ minor is nonzero and
    such that the function $2^{[n]} \rightarrow \R\cup\{-\infty\}$ given by  $S\mapsto \nu(B(\{1,\hdots, |S|\}, S))$ is submodular. Then 
\begin{equation}
		\nu(B(\{1,\hdots, |S|\}, S)) \geq \nu(B(T, S))
	\end{equation}
	for all $S,T \subset [n]$ with $|S|=|T|$. 
\end{conjecture}

\bibliography{bibfile}
\bibliographystyle{alpha}

\end{document}